\def\r{\mathbb R}
\def\s{\mathbb S}
\begin{document}

\title{Compact $\lambda$-translating solitons with boundary\thanks{Partially
supported by MEC-FEDER
 grant no. MTM2014-52368-P}
}
  
\author{Rafael L\'opez}

\institute{Rafael L\'opez \at
              Departamento de Geometr\'{\i}a y Topolog\'{\i}a\\ Instituto de Matem\'aticas (IEMath-GR)\\
 Universidad de Granada\\
 18071 Granada, Spain \\ 
  \email{rcamino@ugr.es}}

\date{Received: date / Accepted: date}

\maketitle
\begin{abstract}
A $\lambda$-translating soliton  with density vector $\vec{v}$   is a surface $\Sigma$ in Euclidean space $\r^3$ whose mean curvature $H$ satisfies $2H=2\lambda+\langle N,\vec{v}\rangle$, where $N$ is the Gauss map of $\Sigma$. In this article we study the shape of a  compact $\lambda$-translating soliton in terms of its boundary. If $\Gamma$ is a given closed curve,  we deduce under what conditions on $\lambda$ there exists a compact $\lambda$-translating soliton $\Sigma$ with boundary $\Gamma$ and we provide estimates of the surface area in relation with the height of $\Sigma$. Finally we study the shape of $\Sigma$ related with the one of $\Gamma$, in particular, we give conditions that assert that $\Sigma$ inherits the symmetries of its boundary $\Gamma$.
\keywords{translating soliton\and   tangency principle\and coarea formula}
\subclass{53A10,  53C44}
\end{abstract}

\section{Introduction}

Let us fix a unit vector $\vec{v}$ in Euclidean space $\r^3$ and $\lambda$ a real number. In this paper we study oriented surfaces $\Sigma$ whose mean curvature $H$ satisfies the equation
\begin{equation}\label{eq1}
H(p)=\lambda+\frac{\langle N(p),\vec{v}\rangle}{2},\quad (p\in\Sigma),
\end{equation}
where $N$ is the Gauss map of $\Sigma$. The interest of this equation is due to its relation with manifolds with density. Indeed, consider $\r^3$ with a  positive  density function $e^\phi$, $\phi\in C^\infty(\Sigma)$, which serves as a weight for  the volume and the surface area. The first variation   of the area $A_\phi(t)$ with density $e^\phi$ under compactly supported variations of $\Sigma$ and with variation vector field $\xi$ is 
$$\frac{d}{dt}{\Big|}_{t=0}A_\phi(t)=-2\int_\Sigma H_\phi\langle N,\xi\rangle  dA_\phi,$$
where $H_\phi=H-\frac12\frac{d\phi}{dN}$. Then it is immediate that   $\Sigma$ is a critical point of the  area $A_\phi$ for a  prescribed weighted volume if and only if  $H_\phi$ is a constant function $H_\phi=\lambda$: see \cite{gr,mo}. If we now take  $\phi:\r^3\rightarrow\r$  the height function $\phi(q)=\langle q,\vec{v}\rangle$, then the expression $H-(d\phi/dN)/2=\lambda$ is just Eq. (\ref{eq1}). An interesting case of (\ref{eq1}) is when $\lambda=0$, 
because the equation $H=\langle N,\vec{v}\rangle/2$ appears in the singularity theory of the mean curvature flow, indeed, it is  the  equation of the limit flow by a proper blow-up procedure near type II singular points (\cite{hs,il}). In such a case the surface is called   a \emph{translating soliton} of the mean curvature flow, or simply a translator \cite{wh}.   Recently, translating solitons have been widely studied  and we refer to the reader the next bibliography without to be a complete list: \cite{aw,css,ha,mar,mh,pyo,sh,sm,wa}.

Motivated by the case $\lambda=0$, we say that $\Sigma$  is a {\it $\lambda$-translating soliton} if (\ref{eq1}) holds everywhere on $\Sigma$ and the vector $\vec{v}$   is called  the \emph{density vector}.  Equation \eqref{eq1} can viewed as a type of prescribing mean curvature equation, such as it occurs for the constant mean curvature (cmc in short) equation. In fact,   the equation $H_\phi=0$ in a nonparametric form  appeared in the classical article of Serrin  (\cite[p. 477--478]{se}) and was studied  in the context of the maximum principle of elliptic equations.

Examples of $\lambda$-translating solitons are planes parallel to $\vec{v}$ ($\lambda=0$), planes orthogonal to $\vec{v}$ ($\lambda=1/2$) and right circular  cylinders of radius $r>0$ with axis parallel to $\vec{v}$ ($\lambda = r/2$). Recently,  the author has classified all $\lambda$-translating solitons that are invariant by a group of translations (cylindrical surfaces) and a group of rotations (surfaces of revolution): see   \cite{lo2}.

 In this paper we study the shape of a compact $\lambda$-translating soliton in terms of the geometry of its boundary. In order to fix the terminology, let $\Gamma\subset\r^3$ be a closed curve and let $\psi:\Sigma\rightarrow\r^3$ be an immersion of  a connected oriented surface $\Sigma$ with smooth boundary $\partial \Sigma$. We say that $\Gamma$ is the boundary of $\psi$ (or simply the boundary of $\Sigma$ if $\psi$ is understood),  if  the immersion $\psi$ restricted to the boundary $\partial \Sigma$ is a diffeomorphism onto $\Gamma$. In case that $\psi$ is an embedding, we say that $\partial\Sigma$ is the boundary of $\psi$. 
 
 Using the divergence theorem, we deduce in Th. \ref{t1} that there do not exist closed $\lambda$-translating solitons. In particular, if $\Sigma$ is a compact surface, then its boundary $\partial\Sigma$ is a non-empty set. If $\Gamma\subset\r^3$ is a given closed curve,   we ask   whether there exist necessary conditions on $\lambda$ for the existence of  a compact $\lambda$-translating soliton with boundary $\Gamma$. We   prove in Th. \ref{tlambda}  that not all values $\lambda$ are possible and that $\lambda$ must have a certain relation with the geometry of $\Gamma$. 
 
 A second question that we address is how the geometry of the boundary $\Gamma$ affects on the shape of the $\lambda$-translating soliton that spans. In Sec. \ref{t-b}, we assume that $\Gamma$ lies in a plane $\Pi$ of $\r^3$. Then we study when $\Sigma$ lies in one side of $\Pi$. Here we will use the maximum principle for Eq. (\ref{eq1}) that allows to establish  comparison arguments between two $\lambda$-translating solitons that touch at some point. In Th. \ref{tarea} we give an estimate of the area of a $\lambda$-translating soliton in terms of its height about $\Pi$. Finally in Sec. \ref{tsym} we study if a compact $\lambda$-translating soliton $\Sigma$ inherits the symmetries of its boundary. To be precise, consider $\Gamma\subset\r^3$ a given closed curve such that $\Gamma$ is invariant by a rigid motion $M:\r^3\rightarrow\r^3$.  If $\Sigma$ is a compact $\lambda$-translating soliton with boundary $\Gamma$, we ask whether $\Sigma$ is also invariant by $M$, that is, if $M(\Sigma)=\Sigma$. The simplest case of boundary  is when $\Gamma$ is a round circle   and we ask if $\Sigma$ is a surface of revolution. It is proved in  \cite{pyo} (see also \cite{pe}) that a  compact translating soliton ($\lambda=0$) spanning a circle contained in a plane orthogonal to $\vec{v}$ is a surface of revolution.  
In this paper we extend  this result by proving in Cor. \ref{c2} that rotational surfaces are the only embedded compact $\lambda$-translating solitons with circular boundary   that  lie in one side of the boundary plane.

\section{Preliminaries and first results}

In Euclidean space $\r^3$ we stand for $(x,y,z)$ the canonical coordinates. We will use the terminology   horizontal (resp. vertical) to be orthogonal (resp. parallel)  to the $z$-direction. It is immediate that if we reverse the orientation on a $\lambda$-translating soliton, then we get a $-\lambda$-translating soliton. It is also clear that any rigid motion of $\r^3$ that leaves invariant the term $\langle N(p),\vec{v}\rangle$ in (\ref{eq1}) is a transformation that preserves the value of $H_\phi$. In particular, this occurs for a translation,  a rotation about a straight-line parallel to $\vec{v}$ and a reflection about a plane parallel to $\vec{v}$.

 If we ask for those closed surfaces (compact without boundary)   that satisfy (\ref{eq1}), we have:

\begin{theorem} \label{t1}
There are no closed $\lambda$-translating solitons.
\end{theorem}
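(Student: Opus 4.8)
The idea is to integrate equation (\ref{eq1}) over $\Sigma$ after pairing it with $\langle N,\vec v\rangle$, so that the only surviving term is sign‑definite. Let $\psi\colon\Sigma\to\r^3$ be the immersion and put $h=\langle\psi,\vec v\rangle$ for the height function on $\Sigma$. Decomposing $\vec v=\vec v^{\,\top}+\langle N,\vec v\rangle N$ into tangent and normal parts one has $\nabla_\Sigma h=\vec v^{\,\top}$, and since $\vec v$ is parallel in $\r^3$ (so $\operatorname{div}_\Sigma\vec v=0$) and $\operatorname{div}_\Sigma N=-2H$, a short computation gives the standard identity
\[
\Delta_\Sigma h=\operatorname{div}_\Sigma(\vec v^{\,\top})=-\langle N,\vec v\rangle\,\operatorname{div}_\Sigma N=2H\langle N,\vec v\rangle .
\]
As $\Sigma$ is closed, the divergence theorem then yields $\int_\Sigma 2H\langle N,\vec v\rangle\,dA=0$.

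Next I substitute $2H=2\lambda+\langle N,\vec v\rangle$ from (\ref{eq1}):
\[
0=\int_\Sigma\bigl(2\lambda+\langle N,\vec v\rangle\bigr)\langle N,\vec v\rangle\,dA
 =2\lambda\int_\Sigma\langle N,\vec v\rangle\,dA+\int_\Sigma\langle N,\vec v\rangle^2\,dA .
\]
The first integral on the right vanishes by the classical identity $\int_\Sigma N\,dA=\vec 0$ for any closed surface (the divergence theorem applied to the region bounded by $\Sigma$ in the embedded case, or Stokes applied to the exact $2$‑form $\iota_{\vec v}(dx\wedge dy\wedge dz)$ on $\r^3$ in general). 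Hence $\int_\Sigma\langle N,\vec v\rangle^2\,dA=0$, and since the integrand is continuous and nonnegative, $\langle N,\vec v\rangle\equiv 0$ on $\Sigma$.

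It remains to discard this possibility. The continuous function $h$ attains a maximum at some $p_0\in\Sigma$, where $dh_{p_0}=0$ forces $\vec v$ to be orthogonal to $\psi_*(T_{p_0}\Sigma)$, i.e.\ $N(p_0)=\pm\vec v$ and therefore $\langle N(p_0),\vec v\rangle=\pm1\neq0$, a contradiction. (Equivalently, $\langle N,\vec v\rangle\equiv0$ makes $\vec v$ everywhere tangent to $\Sigma$, so its integral curves are straight lines contained in the compact surface $\Sigma$, which is impossible.) Thus no closed $\lambda$-translating soliton exists. I do not expect a genuinely hard step: the argument is elementary once one notices that the global integral identity is the right tool. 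The point requiring care is sign bookkeeping — the sign of $\operatorname{div}_\Sigma N$ relative to the normalization $2H=\kappa_1+\kappa_2$ used in (\ref{eq1}), though in fact the displayed identity $\int_\Sigma 2H\langle N,\vec v\rangle\,dA=0$ is insensitive to it. It is also worth remarking that $\vec v\neq0$ enters essentially (consistently with the fact that for $\vec v=0$ the statement fails, round spheres solving $H=\lambda$), and that a purely local analysis at the extrema of $h$ only excludes the range $|\lambda|<\tfrac12$, which is precisely why the global identity is needed.
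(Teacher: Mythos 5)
Your proof is correct and follows essentially the same route as the paper: integrate the identity $\Delta\langle\psi,\vec v\rangle=2H\langle N,\vec v\rangle$ over the closed surface, use $\int_\Sigma\langle N,\vec v\rangle\,d\Sigma=0$ to kill the $\lambda$-term, and conclude $\langle N,\vec v\rangle\equiv 0$. Your final step (ruling out $\langle N,\vec v\rangle\equiv 0$ via a maximum of the height function) is in fact a slightly more careful justification than the paper's brief claim that $\Sigma$ would lie in a plane parallel to $\vec v$, but the argument is the same in substance.
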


\begin{proof} By contradiction, let $\psi:\Sigma\rightarrow\r^{3}$ be an immersion of a closed surface $\Sigma$ whose mean curvature $H$ satisfies (\ref{eq1}). It is known that if $\vec{a}\in\r^{3}$,   the Laplace-Beltrami operator $\Delta$ on $\Sigma$ of the height function $\langle \psi,\vec{a}\rangle$ is $\Delta\langle\psi,\vec{a}\rangle=2H\langle N,\vec{a}\rangle$. If we now put  $\vec{a}=\vec{v}$, we have  
\begin{equation}\label{lapla}
\Delta\langle\psi,\vec{v}\rangle=2\lambda\langle N,\vec{v}\rangle +\langle N,\vec{v}\rangle^2.
\end{equation}
 Integrating this identity in $\Sigma$, using the divergence theorem and because $\partial\Sigma=\emptyset$, we have
\begin{equation}\label{com}
0=2\lambda\int_\Sigma \langle N,\vec{v}\rangle\ d\Sigma+\int_\Sigma\langle N,\vec{v}\rangle^2\ d\Sigma.
\end{equation}
Since the constant vector field in $\r^3$ defined by $Y(p)=\vec{v}$ has zero divergence, and $\Sigma$ is a closed surface, the divergence theorem gives  $\int_\Sigma\langle N,\vec{v}\rangle d\Sigma=0$. We conclude from \eqref{com} that $0=\int_\Sigma\langle N,\vec{v}\rangle^2\ d\Sigma$, that is, $\Sigma$ is included in a plane parallel to $\vec{v}$, a contradiction.
\end{proof}

As a consequence of Th. \ref{t1}, any compact $\lambda$-translating soliton $\Sigma$ has non-empty boundary $\partial\Sigma$. This contrasts with the theory of cmc    surfaces, where there are many examples closed surfaces. 

Equation (\ref{lapla}) allows to answer  the question if for a given closed curve $\Gamma$   there exist necessary conditions on the value  $\lambda$ for the existence of a $\lambda$-translating soliton with boundary $\Gamma$.  Let $\Sigma$ be a compact $\lambda$-translating soliton oriented by $N$. Let $K$ be a compact  oriented surface with $\partial K=\partial\Sigma$ and such that $\Sigma\cup K$ is an oriented $2$-cycle of $\r^3$. Let $\eta_K$ be the unit normal vector field induced on $K$. The divergence theorem gives 
\begin{equation}\label{nn}
\int_\Sigma \langle N,\vec{v}\rangle\ d\Sigma+\int_{K}\langle\eta_K,\vec{v}\rangle\ dK=0.
\end{equation}
An integration of (\ref{lapla}) in $\Sigma$ together (\ref{nn}) gives  
\begin{eqnarray*}
-\int_{\partial\Sigma}\langle\nu,\vec{v}\rangle ds&=&2\lambda\int_\Sigma\langle N,\vec{v}\rangle\ d\Sigma+\int_\Sigma\langle N,\vec{v}\rangle^2\ d\Sigma\geq 2\lambda\int_\Sigma\langle N,\vec{v}\rangle\ d\Sigma\\
&=&-2\lambda \int_K\langle \eta_K,\vec{v}\rangle\ dK,
\end{eqnarray*}
where $\nu$ is the unit inward conormal vector along $\partial\Sigma$. Hence 
$$2|\lambda|\left|\int_K \langle \eta_K,\vec{v}\rangle\ dK\right|\leq L,$$
where $L$ is the length of $\Gamma$. As a conclusion, we have:

\begin{theorem}\label{tlambda} Let $\Gamma$ be  a Jordan curve. If $\Sigma$ is a compact $\lambda$-translating soliton with boundary $\Gamma$, then 
\begin{equation}\label{nec}
|\lambda|\leq \frac{L}{2\left|\int_K\langle\eta_K,\vec{v}\rangle dK\right|},
\end{equation}
for any orientable compact surface $K$ with $\partial K=\Gamma$ and $\int_K\langle \eta_K,\vec{v}\rangle dK\not=0$. In the particular case that $\Gamma$ is included in a plane $\Pi$ which is not parallel to $\vec{v}$, we have 
$$|\lambda|\leq \frac{L}{2|\langle\vec{v},\vec{a}\rangle|\mbox{ area}(D)}$$
where $D\subset P$ is the  domain bounded by $\Gamma$ and $\vec{a}$ is a unit vector orthogonal to $\Pi$. 
\end{theorem}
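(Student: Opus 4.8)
The plan is to observe that inequality \eqref{nec} is exactly the conclusion of the chain of identities displayed just before the statement, and then to derive the planar estimate by choosing the auxiliary surface $K$ to be the flat region enclosed by $\Gamma$. For the general bound I would integrate the Bochner-type identity \eqref{lapla} over $\Sigma$ and apply Green's identity on $\Sigma$, turning $\int_\Sigma\Delta\langle\psi,\vec v\rangle\,d\Sigma$ into the boundary integral $-\int_{\partial\Sigma}\langle\nu,\vec v\rangle\,ds$ (using that the tangential gradient of $\langle\psi,\vec v\rangle$ restricts along $\partial\Sigma$ to $\langle\vec v,\nu\rangle$ since $\nu$ is tangent to $\Sigma$). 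Then I would invoke \eqref{nn} --- itself the divergence theorem applied to the divergence-free field $Y\equiv\vec v$ on the region bounded by the $2$-cycle $\Sigma\cup K$ --- to replace $\int_\Sigma\langle N,\vec v\rangle\,d\Sigma$ by $-\int_K\langle\eta_K,\vec v\rangle\,dK$. Discarding the nonnegative summand $\int_\Sigma\langle N,\vec v\rangle^2\,d\Sigma$ and bounding $\bigl|\int_{\partial\Sigma}\langle\nu,\vec v\rangle\,ds\bigr|\le\int_{\partial\Sigma}ds=L$ (here $|\nu|=|\vec v|=1$) yields $2|\lambda|\,\bigl|\int_K\langle\eta_K,\vec v\rangle\,dK\bigr|\le L$, which is \eqref{nec} after dividing, legitimately since $\int_K\langle\eta_K,\vec v\rangle\,dK\ne 0$ by hypothesis.

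For the planar case I would take $K=\overline{D}$, the closure of the domain of $\Pi$ bounded by the Jordan curve $\Gamma$ (well defined by the Jordan curve theorem), oriented by a constant unit normal $\eta_K=\pm\vec a$, the sign being chosen so that $\Sigma\cup K$ is a coherently oriented $2$-cycle. Because $K$ is flat, $\langle\eta_K,\vec v\rangle=\pm\langle\vec a,\vec v\rangle$ is constant on $K$, so $\bigl|\int_K\langle\eta_K,\vec v\rangle\,dK\bigr|=|\langle\vec v,\vec a\rangle|\,\mathrm{area}(D)$, and this is nonzero precisely because $\Pi$ is not parallel to $\vec v$, i.e. $\langle\vec v,\vec a\rangle\ne 0$. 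Substituting into \eqref{nec} gives $|\lambda|\le L/\bigl(2|\langle\vec v,\vec a\rangle|\,\mathrm{area}(D)\bigr)$, as claimed.

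I do not anticipate a serious obstacle: the only substantive step is the one already carried out, namely trading the second-order identity \eqref{lapla} for a length bound on $\partial\Sigma$ after dropping the quadratic term $\int_\Sigma\langle N,\vec v\rangle^2\,d\Sigma$. The points that require a little care are the orientation bookkeeping --- making sure $\Sigma$ and $K$ induce opposite orientations on $\Gamma$ so that \eqref{nn} holds, and keeping track of signs when passing to absolute values in \eqref{nec} --- and, in the planar reduction, checking that the integrand $\langle\eta_K,\vec v\rangle$ is genuinely constant and non-vanishing on the flat auxiliary surface, which is exactly what forces the hypothesis that $\Pi$ not be parallel to $\vec v$.
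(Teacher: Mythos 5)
Your argument is exactly the paper's: integrate the identity \eqref{lapla} over $\Sigma$, convert the Laplacian into the conormal boundary integral $-\int_{\partial\Sigma}\langle\nu,\vec{v}\rangle\,ds$, use \eqref{nn} to replace $\int_\Sigma\langle N,\vec{v}\rangle\,d\Sigma$ by $-\int_K\langle\eta_K,\vec{v}\rangle\,dK$, discard the nonnegative quadratic term, bound the boundary integral by $L$, and then specialize $K$ to the flat domain $\overline{D}$ with constant unit normal $\pm\vec{a}$ to obtain the planar estimate. The one point you flag but do not actually carry out --- the sign bookkeeping needed to pass from the one-sided inequality $-2\lambda\int_K\langle\eta_K,\vec{v}\rangle\,dK\le L$ produced by this chain to the two-sided bound \eqref{nec} --- is left equally implicit in the paper's own derivation, so your proposal matches it step for step.
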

We observe that the right-hand side in (\ref{nec}) does not depend on $\Sigma$ and thus, if $\lambda\not=0$, Th. \ref{tlambda} implies that not all values of $\lambda$ are possible, but there exists an upper bound that depends only on the geometry of $\Gamma$. 

 We extend the above argument in case that $\partial\Sigma$ has more than one component.  Let $\psi:\Sigma\rightarrow\r^3$ be an immersion of a compact oriented surface $\Sigma$ where $N:\Sigma\rightarrow\s^2$ is its orientation.  Let $\partial\Sigma=C_1\cup\ldots\cup C_m$, where $C_i$ are topological circles. We attach topological disks $\Omega_i$ to $\Sigma $ along $C_i$ and we obtain an orientable two dimensional compact connected topological surface 
$$\tilde{\Sigma}=\Sigma\cup\left(\bigcup_{i=1}^m\Omega_i\right)$$
without boundary. Let $\Omega=\cup_{i=1}^m\Omega_i$ and let $\eta_\Omega$ be the induced orientation on $\Omega$. We assume that the restriction of the immersion $\psi$ to $C_i$, $\psi_{|C_i}:C_i\rightarrow\r^3$, is an embedding and let $\psi(C_i)=\Gamma_i$. Consider $K_i$ a compact surface spanning $\Gamma_i$. We can extend $\psi$ continuously to $\tilde{\psi}:\tilde{\Sigma}\rightarrow\r^3$ such that each $\tilde{\psi}_{|\Omega_i}$ is a diffeomorphism of $\Omega_i$ onto $K_i$. The   argument before Th. \ref{tlambda} yields now
\begin{equation}\label{n3}
-\int_{\partial\Sigma}\langle\nu,\vec{v}\rangle ds\geq-2\lambda\sum_{i=1}^m\int_{C_i}\langle\eta_{\Omega},\vec{v}\rangle dK.
\end{equation}
An interesting case appears when each component $\Gamma_i$ lies in a plane $\Pi_i$ which is not parallel to $\vec{v}$. We take $D_i\subset\Pi_i$ the   domain bounded by $\Gamma_i$. Then $\eta_{\Omega}$ is a constant vector $\vec{a}_i$ or $-\vec{a}_i$, where $\vec{a}_i$ is a unit vector orthogonal to $\Pi_i$ chosen to have $\langle\vec{a}_i,\vec{v}\rangle>0$. For $1\leq i\leq m$, set 
$$\mbox{sgn}(N,i)=\left\{\begin{array}{ll} 
+1& \mbox{if $\eta_{\Omega}=-\vec{a}_i$ on $\Omega_i$,}\\
-1& \mbox{if $\eta_{\Omega}=\vec{a}_i$ on $\Omega_i$.}
\end{array}\right.$$
With the above notation and from (\ref{n3}), we conclude:

\begin{corollary} Let $\psi:\Sigma\rightarrow\r^3$ be a $\lambda$-translating soliton where $\Sigma$ is a compact surface whose boundary $\psi(\partial\Sigma)$ is a finite number of Jordan curves $\Gamma_1,\ldots,\Gamma_m$ included each one in a plane $\Pi_i$. If for all $1\leq i\leq m$,   $\Pi_i$ is not parallel to the density vector $\vec{v}$, then 
$$2\lambda\sum_{i=1}^m\mbox{sgn}(N,i)\langle\vec{a}_i,\vec{v}\rangle\mbox{area}(D_i)\leq\sum_{i=1}^m L_i,$$
where $L_i$ is the length of $\Gamma_i$.
\end{corollary}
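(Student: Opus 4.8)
The plan is to specialize the inequality (\ref{n3}) to the planar situation, following step by step the computation that produced Th.~\ref{tlambda}. First I would make a convenient choice of the spanning surfaces: for each index $i$, take $K_i$ to be the planar domain $D_i\subset\Pi_i$ bounded by $\Gamma_i$. This is legitimate precisely because $\Gamma_i$ is a Jordan curve, so it encloses a well-defined region of finite area inside $\Pi_i$. Since $\Pi_i$ is not parallel to $\vec v$, the unit normal of $K_i$ is a constant vector, equal to $\vec a_i$ or to $-\vec a_i$, and this dichotomy is recorded precisely by the symbol $\mbox{sgn}(N,i)$ defined above the statement. With this choice the flux of $\vec v$ through the glued disk simplifies to
\begin{equation*}
\int_{\Omega_i}\langle\eta_\Omega,\vec v\rangle\,dK=-\,\mbox{sgn}(N,i)\,\langle\vec a_i,\vec v\rangle\,\mbox{area}(D_i),
\end{equation*}
so that the right-hand side of (\ref{n3}) turns into $2\lambda\sum_{i=1}^m\mbox{sgn}(N,i)\langle\vec a_i,\vec v\rangle\,\mbox{area}(D_i)$.

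Next I would estimate the left-hand side of (\ref{n3}) from above in the obvious way: $\nu$ and $\vec v$ are unit vectors, hence $-\langle\nu,\vec v\rangle\le|\langle\nu,\vec v\rangle|\le 1$ pointwise along $\partial\Sigma$, and integrating over $\partial\Sigma=C_1\cup\dots\cup C_m$ gives $-\int_{\partial\Sigma}\langle\nu,\vec v\rangle\,ds\le\sum_{i=1}^m L_i$, the total length of $\partial\Sigma$. Chaining this bound with the identity of the previous paragraph produces the claimed inequality. In effect the corollary is just the estimate already displayed before Th.~\ref{tlambda}, now carried out component by component with the spanning surfaces taken to be the flat disks $D_i$.

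Accordingly I do not expect a genuine obstacle here; the only point requiring care is the orientation bookkeeping. One must check that the induced normal $\eta_\Omega$ on each glued disk $\Omega_i$ is compatible, through the diffeomorphism $\tilde\psi|_{\Omega_i}\colon\Omega_i\to K_i$, with orienting $K_i$ by $\pm\vec a_i$, and that after attaching all of the $\Omega_i$ the resulting surface $\tilde\Sigma$ is genuinely an oriented $2$-cycle, so that the divergence theorem may be applied to obtain (\ref{nn}) and hence (\ref{n3}). All of this is exactly what the construction of $\tilde\Sigma$ and $\tilde\psi$ preceding the statement arranges, so it suffices to invoke it; no idea beyond the proof of Th.~\ref{t1} and the remarks following it is needed.
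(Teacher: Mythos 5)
Your proposal is correct and follows essentially the same route as the paper: the corollary is obtained by specializing (\ref{n3}) with $K_i=D_i$, using that $\eta_\Omega=-\mbox{sgn}(N,i)\vec{a}_i$ is constant on each $\Omega_i$, and bounding $-\int_{\partial\Sigma}\langle\nu,\vec{v}\rangle\,ds$ by the total length $\sum_i L_i$ via $|\langle\nu,\vec{v}\rangle|\le 1$. The orientation bookkeeping you flag is exactly what the construction of $\tilde\Sigma$ preceding the statement is set up to handle, so there is nothing further to add.
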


\section{Compact $\lambda$-translating solitons with planar boundary}\label{t-b}

Theorem \ref{t1} is known for translating solitons and the proof invokes the tangency principle, a geometric version of the maximum principle for Eq. (\ref{eq1}). Indeed, we write Eq. (\ref{eq1}) in  terms of a local parametrization of the surface. After a change of coordinates, we suppose that the density vector is $\vec{v}=(0,0,1)$.  Then a surface   in nonparametric form $z=u(x,y)$   is a $\lambda$-translating soliton if $u$ satisfies
\begin{equation}\label{mean1}
\mbox{div}\left(\frac{Du}{\sqrt{1+|Du|^2}}\right)=2\lambda+\frac{1}{\sqrt{1+|Du|^2}}.
\end{equation}
 Equation (\ref{mean1})  is of elliptic type and it satisfies a maximum principle (\cite{gt,se}), which    can be formulated as follows:  

\begin{proposition}[Tangency principle] Let $\Sigma_1$ and $\Sigma_2$ be  two surfaces with $H_\phi^i$-mean curvatures,  respectively. Suppose $\Sigma_1$ and $\Sigma_2$ are tangent at a common interior point $p$ and the corresponding Gauss maps $N_1$ and $N_2$ coincide at $p$. If $H_\phi^2\leq H_\phi^1$ in a neighbourhood of $p$, then it is not true that $\Sigma_2$ lies above $\Sigma_1$ near $p$ with respect to $N_1(p)$, unless $\Sigma_1=\Sigma_2$ in a neighbourhood of $p$. If $p\in\partial \Sigma_1\cap \partial \Sigma_2$ is a boundary point, the same holds if, in addition,  we have $T_p\partial \Sigma_1=T_p\partial \Sigma_2$. 
\end{proposition}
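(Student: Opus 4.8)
The plan is to translate the statement into the classical strong maximum principle and Hopf boundary point lemma for a linear second-order elliptic operator, in the spirit of the tangency principle for constant mean curvature surfaces. First I would fix coordinates by a translation and a rotation so that $p$ is the origin and the common tangent plane $T_p\Sigma_1=T_p\Sigma_2$ is $\{z=0\}$, with $N_1(p)=N_2(p)=(0,0,1)$; here $\vec v$ is some fixed unit vector, not necessarily vertical, which we carry along in these coordinates. Since the Gauss maps agree at $p$, on a neighbourhood of $p$ each $\Sigma_i$ is a graph $z=u_i(x,y)$ over a neighbourhood $\Omega_i$ of the origin in the $xy$-plane, with $u_i(0)=0$, $Du_i(0)=0$, and $u_i$ solving $G[u_i]=2H_\phi^i$, where $G[u]=\operatorname{div}\bigl(Du/\sqrt{1+|Du|^2}\bigr)-\langle N[u],\vec v\rangle$ and $N[u]$ is the upward unit normal of the graph. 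The structural point to exploit is that $G$ involves $u$ only through its first and second derivatives — never through the value of $u$ itself. In this set-up the hypothesis that $\Sigma_2$ lies above $\Sigma_1$ near $p$ with respect to $N_1(p)$ becomes $w:=u_2-u_1\ge 0$ on a common neighbourhood, while the conclusion that $\Sigma_1=\Sigma_2$ near $p$ becomes $w\equiv 0$.

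Next I would linearize along the segment joining $u_1$ to $u_2$: from
$$G[u_2]-G[u_1]=\int_0^1\frac{d}{dt}\,G[u_1+tw]\,dt=\mathcal L w$$
one gets a linear operator $\mathcal L w=a^{ij}(x)D_{ij}w+b^i(x)D_i w+c(x)w$ with coefficients smooth in $x$ and built from $u_1,u_2$ and their first and second derivatives. Since $|Du_i|$ stays bounded near the origin, $(a^{ij})$ is uniformly elliptic on a perhaps smaller neighbourhood; and, because $G$ does not see the value of $u$, the zeroth-order coefficient vanishes, $c\equiv 0$. The hypothesis $H_\phi^2\le H_\phi^1$ then gives $\mathcal L w=2(H_\phi^2-H_\phi^1)\le 0$, so $w$ is a supersolution of an elliptic operator with no zeroth-order term.

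In the interior case, $w\ge 0$ with $w(0)=0$ exhibits the origin as an interior minimum of $w$, and Hopf's strong maximum principle (\cite{gt,se}) then forces a supersolution with $c\equiv 0$ attaining an interior minimum to be constant; hence $w\equiv 0$ and $\Sigma_1=\Sigma_2$ near $p$. In the boundary case, the extra hypothesis $T_p\partial\Sigma_1=T_p\partial\Sigma_2$ allows representing both surfaces as graphs over one common domain $\Omega$ whose boundary passes through the origin. The strong maximum principle gives a dichotomy: either $w\equiv 0$ near $p$, and we are done, or $w>0$ in the interior of $\Omega$; in the latter case I would apply the Hopf boundary point lemma at the origin — using the interior sphere condition for $\partial\Omega$ there, $c\equiv 0$, and $w>0$ inside with $w(0)=0$ — to obtain $\partial w/\partial n(0)<0$ for the outward unit normal $n$. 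This contradicts $Dw(0)=0$, which follows from $Du_1(0)=Du_2(0)=0$; hence once more $w\equiv 0$.

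I expect the genuinely delicate part not to be the analysis — the strong maximum principle and the Hopf lemma are standard once ellipticity and $c\equiv 0$ are in place — but the geometric set-up in the boundary case: producing a single domain $\Omega$ over which \emph{both} surfaces are graphs, whose boundary passes through the origin and admits an interior sphere condition there, starting only from $T_p\partial\Sigma_1=T_p\partial\Sigma_2$, i.e. from the fact that the two projected boundary curves are tangent at the origin. After shrinking, $\Omega_1\cap\Omega_2$ contains such a domain and $w$ is well defined and $\ge 0$ on it; this needs a careful but idea-free argument with the projection to the tangent plane.
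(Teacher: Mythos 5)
Your proposal is correct, and it is exactly the argument the paper intends: the paper does not actually prove this proposition, it only remarks that equation (3) is elliptic and cites \cite{gt,se}, so your write-up is the standard unpacking of that citation (graphs over the common tangent plane, difference function, linearization with no zeroth-order term because the operator sees only $Du$ and $D^2u$, strong maximum principle in the interior, Hopf boundary point lemma at the boundary). The signs all check out: $w=u_2-u_1\ge 0$ is a supersolution ($\mathcal L w\le 0$, $c\equiv 0$) with an interior minimum at the origin, and in the boundary case the Hopf lemma's strict inequality $\partial w/\partial n(0)<0$ contradicts $Dw(0)=0$. One minor caveat worth flagging: when the $H_\phi^i$ are non-constant functions, $G[u_2](x)-G[u_1](x)=2H_\phi^2(x,u_2(x))-2H_\phi^1(x,u_1(x))$ compares values at \emph{different} ambient points over the same base point $x$, so the hypothesis ``$H_\phi^2\le H_\phi^1$ in a neighbourhood of $p$'' has to be read in that sense for $\mathcal L w\le 0$ to follow; this is a looseness of the statement rather than of your proof, and it is harmless in every application in the paper, where both surfaces have constant $H_\phi$ (a $\lambda$-translating soliton against a plane or against another $\lambda$-translating soliton).
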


A consequence of the expression of $H_\phi$ in (\ref{mean1}) is that a  surface with constant $H_\phi=\lambda$ is real analytic and consequently, if two     surfaces $\Sigma_1$ and $\Sigma_2$ with the same  constant $H_\phi$ coincide in an open set, then $\Sigma_1$ and $\Sigma_2$ coincide everywhere. In  case that $\Sigma_1$ and $\Sigma_2$ are translating solitons,  the condition on the orientations at the common point can be dropped because $H_\phi=0$ holds for any orientation.

The proof of Th. \ref{t1} for translating solitons ($\lambda=0$) is as follows. Because $\Sigma$ is a closed surface, let $P$ be a plane parallel to $\vec{v}$ which is tangent to $\Sigma$ at some point $p\in\Sigma$ and $\Sigma$ lies in one side of $P$.  Since $P$ is a translating soliton, the tangency principle implies that $\Sigma$ is contained in $P$ by analyticity, which it is not possible.   We observe that this argument fails when $\lambda\not=0$.

In this section we study  compact $\lambda$-translating solitons whose boundary is included in a plane $\Pi$.   Firstly,  we consider   translating solitons,  proving that the surface    is a domain of the very plane $\Pi$ or it  is contained in one of the two halfspaces determined by the plane $\Pi$.  

\begin{theorem}\label{t2}  Let $\Sigma$ be a translating soliton   whose boundary $\Gamma$ is contained in a plane $\Pi$. Then we have two possibilities:
\begin{enumerate}
\item $\Pi$ is parallel to $\vec{v}$ and the surface $\Sigma$ is included in $\Pi$.
\item  $\Pi$ is not parallel to $\vec{v}$, the interior of $\Sigma$, $\mbox{int}(\Sigma)$, lies  in one side of $\Pi$, and  $\Sigma$ is not tangent  to $\Pi$ at any boundary point. 
\end{enumerate}
\end{theorem}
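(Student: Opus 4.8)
The plan is to prove each case by a direct application of the tangency principle, using planes parallel and non-parallel to $\vec v$ as comparison surfaces, exploiting that any plane parallel to $\vec v$ is a translating soliton (with $\lambda=0$) and that the orientation hypothesis in the tangency principle can be dropped for translating solitons.

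\medskip

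\textbf{Case (1): $\Pi$ parallel to $\vec v$.} First I would show $\Sigma\subset\Pi$. Since $\Sigma$ is compact, for a sufficiently far translate $\Pi_s$ of $\Pi$ in a direction orthogonal to $\vec v$ the surface lies entirely on one side, and I would slide $\Pi_s$ back toward $\Pi$ until it first touches $\Sigma$. If the first contact is at an interior point $p$, then at $p$ the surface $\Sigma$ lies (locally) above $\Pi_s$ with respect to the normal of $\Pi_s$ pointing into the halfspace containing $\Sigma$; since $\Pi_s$ is a translating soliton and orientations are irrelevant here, the tangency principle forces $\Sigma=\Pi_s$ near $p$, hence (by analyticity, as remarked after the tangency principle) $\Sigma\subset\Pi_s$; but $\partial\Sigma=\Gamma\subset\Pi$, so $\Pi_s=\Pi$ and we are done. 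If instead the first contact is at a boundary point, then because $\Gamma\subset\Pi$ the plane $\Pi_s$ cannot properly touch $\Gamma$ before reaching $\Pi$ itself, so the sliding stops exactly at $\Pi$; I would then argue the same way from $\Pi$: any interior point of $\Sigma$ off $\Pi$ would give, after sliding a parallel plane from infinity on each side, an interior tangency as above, forcing $\Sigma\subset\Pi$. So on either side $\Sigma$ cannot stick out of $\Pi$, and $\Sigma\subset\Pi$.

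\medskip

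\textbf{Case (2): $\Pi$ not parallel to $\vec v$.} Here the plane $\Pi$ is \emph{not} a translating soliton (a plane non-parallel to $\vec v$ has $H=0$ but $\langle N,\vec v\rangle\neq0$), so I cannot use $\Pi$ itself as a comparison surface in the same crude way; instead I would use planes \emph{parallel to $\vec v$} as barriers to show $\mathrm{int}(\Sigma)$ stays on one side of $\Pi$. Suppose, for contradiction, that $\mathrm{int}(\Sigma)$ meets both open halfspaces determined by $\Pi$. Choose coordinates with $\vec v=(0,0,1)$; since $\Pi$ is not horizontal it contains a horizontal direction, and we may take a plane $P_0$ parallel to $\vec v$ that contains $\Gamma$'s supporting line or, more simply, consider the family of planes parallel to $\vec v$ and perpendicular to $\Pi$. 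Sliding such a plane from infinity gives a first interior tangency with $\Sigma$ (the tangency is interior because $\Gamma$ lies in one such plane only in a degenerate position; I would pick the family so that $\Gamma$ projects to a bounded set and the extreme plane touches $\Sigma$ at an interior point), and again the tangency principle against a plane-parallel-to-$\vec v$ forces $\Sigma$ to be planar, contradicting that $\mathrm{int}(\Sigma)$ is strictly on both sides of $\Pi$. Hence $\mathrm{int}(\Sigma)$ lies in one closed halfspace of $\Pi$; combined with the boundary version of the tangency principle applied at a hypothetical interior-of-edge tangency, $\mathrm{int}(\Sigma)$ in fact lies strictly on one side away from $\Gamma$. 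Finally, if $\Sigma$ were tangent to $\Pi$ at a boundary point $p\in\Gamma$, then near $p$ both $\Sigma$ and (a piece of) $\Pi$ share the tangent plane and the tangent line $T_p\Gamma=T_p\partial\Sigma$; comparing $\Sigma$ with a plane parallel to $\vec v$ through $T_p\Gamma$, or directly invoking the boundary-point tangency principle with the graph equation \eqref{mean1}, yields $\Sigma\subset$ that plane, again a contradiction.

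\medskip

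\textbf{Main obstacle.} The delicate point is the boundary behaviour: because $\Gamma$ lies entirely in $\Pi$, the ``slide a plane from infinity'' argument must be set up so that the \emph{first} contact between the moving plane and $\Sigma$ is guaranteed to be an \emph{interior} point of $\Sigma$ (otherwise the interior tangency principle does not apply and one only learns information about $\Gamma$). I would handle this by choosing the direction of sliding transverse to the plane $\Pi$ in Case (1) and transverse to a well-chosen horizontal direction in Case (2), and by treating the borderline situation where the extreme plane meets $\Sigma$ only along $\Gamma$ separately, using the boundary version of the tangency principle together with the matching of tangent lines $T_p\partial\Sigma_1=T_p\partial\Sigma_2$ and the real-analyticity of solutions of \eqref{mean1} to propagate the coincidence globally.
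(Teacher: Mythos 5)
Your Case (1) is essentially the paper's argument and is fine. The genuine gap is in Case (2). You propose to slide planes parallel to $\vec v$ toward $\Sigma$ and obtain a first \emph{interior} tangency, but this cannot work: since $\Gamma\subset\Pi$ and $\Pi$ is not parallel to $\vec v$, a plane parallel to $\vec v$ arriving from infinity must first meet $\Sigma$ along its boundary $\Gamma$ (if the first contact were interior, the tangency principle would force $\Sigma$ into that vertical plane, which is impossible since $\Gamma$ would then lie in a line). This observation only yields the cylinder property of Cor.~\ref{prv}; it says nothing about which side of $\Pi$ the surface occupies, because a plane parallel to $\vec v$ never separates the two halfspaces determined by the non-vertical plane $\Pi$. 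Your ``Main obstacle'' paragraph flags the difficulty, but the proposed fix (``pick the family so that the extreme plane touches $\Sigma$ at an interior point'') is exactly what is impossible. The same problem recurs at the end: comparing $\Sigma$ with a vertical plane through $T_p\Gamma$ at a boundary tangency gives nothing, since that plane is in general not tangent to $\Sigma$ at $p$ and $\Sigma$ need not lie on one side of it.

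The missing idea is to use planes \emph{parallel to $\Pi$} as comparison surfaces: with a fixed unit normal $\eta$, every such plane is a $\lambda$-translating soliton for $\lambda=-\langle\eta,\vec v\rangle/2\neq 0$, and after reversing $\eta$ one may assume $\lambda>0$. If $\mbox{int}(\Sigma)$ had a point on the $-\eta$ side of $\Pi$, take the extreme point $p$ of $\Sigma$ in that direction; its affine tangent plane $P$ is parallel to $\Pi$, $\Sigma$ lies above $P$ near $p$ with respect to $\eta$, and---crucially---since $\Sigma$ is a translating soliton for \emph{either} orientation, you may orient $\Sigma$ so that $N(p)=\eta$. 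The tangency principle then forces $0>\lambda$, a contradiction. The same comparison with $\Pi$ itself (interior and boundary versions) gives $\mbox{int}(\Sigma)\cap\Pi=\emptyset$ and the non-tangency along $\Gamma$. The two ingredients your proposal lacks are precisely the foliation of $\r^3$ by planes parallel to $\Pi$ (all $\lambda$-solitons for the same $\lambda\neq 0$) and the freedom to flip the orientation of a translating soliton so as to match the plane's orientation at the contact point.
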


\begin{proof} After a change of coordinates, we suppose that   $\Pi$ is the horizontal plane of equation $z=0$. Let  $\eta$ be a unitary orientation on $\Pi$ which leads that   $\Pi$ is a $\lambda$-translating soliton for $\lambda=-\langle\eta,\vec{v}\rangle/2$. We have two possibilities.
\begin{enumerate}
\item    $\Pi$ is   parallel   to $\vec{v}$. If $\Sigma$ is not included in $\Pi$, consider $P$ a plane parallel to $\Pi$ in the side of $\Pi$ where $\Sigma$ has points and move $P$ far from $\Sigma$ so $P\cap\Sigma=\emptyset$. Translate $P$ towards $\Sigma$ parallel to $\Pi$ until the first touching point. Since this point is an interior point of $\Sigma$, and $P$ and $\Sigma$ are translating solitons,   the tangency principle and analyticity  implies   $\Sigma\subset P$, a contradiction because $\partial\Sigma\subset\Pi$ and $\Pi\not=P$.  
\item   $\Pi$ is not parallel to $\vec{v}$. By reversing the orientation $\eta$  on $\Pi$ if necessary, we suppose $\lambda>0$. We prove that the interior of $\Sigma$ does not contain points in the side of $\Pi$ where $-\eta$ points to. By contradiction, let $p\in\mbox{int}(\Sigma)$ be the lowest point of $\Sigma$  with respect to the direction $\eta$. If $P$ is the affine tangent plane to $\Sigma$ at $p$, then $P$ is a $\lambda$-translating soliton for the orientation $\eta$. Let $N$ be the orientation on $\Sigma$ such that $N(p)=\eta$. We observe again that $\Sigma$ is a translating soliton for any orientation. With this choice of $N$, the surface $\Sigma$ lies above $P$ in a neighbourhood of $p$. Since $\lambda\not=0$, the tangency principle implies that $0>\lambda$: a contradiction. As a conclusion, $\Sigma$ lies above $\Pi$. Finally, the same argument with the tangency principle proves that  $\mbox{int}(\Sigma)\cap \Pi=\emptyset$ (interior version) and $\Sigma$ is not tangent  to $\Pi$ at any boundary point (boundary version). 
\end{enumerate}
\end{proof}

There are two keys in the above proof. Firstly, the given surface $\Sigma$ is a translating soliton and thus  if we reverse its orientation, the property to be a translating soliton is preserved. The second fact that we utilize is that the ambient space is foliated by a uniparametric   of $\lambda$-translating solitons, namely, all planes parallel to $\Pi$.  

 A consequence of technique employed in the proof of item (1) of Th. \ref{t2} is that when we move a plane parallel to $\vec{v}$ towards a translating soliton, the first touching point must be a boundary point and hence we conclude: 
\begin{corollary}\label{prv} Let $\Sigma$ be a compact translating soliton whose boundary $\Gamma$ is included in a plane $\Pi$ which is not parallel to $\vec{v}$. Then the interior of $\Sigma$ is included in the solid cylinder $\Omega\times\r\vec{v}$, where $\Omega\subset Sp\{\vec{v}\}^\bot$ is the   domain bounded by the convex hull of  $\pi(\Gamma)$, and $\pi:\r^3\rightarrow Sp\{\vec{v}\}^\bot$ is the orthogonal projection on the  plane $Sp\{\vec{v}\}^\bot$. 
\end{corollary}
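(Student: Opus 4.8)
The plan is to run, for every plane parallel to $\vec{v}$, the sliding-plane argument used in part (1) of Theorem \ref{t2}. Since $\Pi$ is not parallel to $\vec{v}$, the orthogonal projection $\pi$ restricts to a bijection of $\Pi$ onto $Sp\{\vec{v}\}^\bot$, so $\pi(\Gamma)$ is a Jordan curve in $Sp\{\vec{v}\}^\bot\cong\r^2$ and the convex body $C:=\mathrm{conv}(\pi(\Gamma))$ has non-empty interior, which is exactly $\Omega$. Because $\langle x,\vec{w}\rangle=\langle\pi(x),\vec{w}\rangle$ for every vector $\vec{w}$ orthogonal to $\vec{v}$ (``horizontal''), the support function of $\pi(\Gamma)$ in a horizontal direction $\vec{w}$ is $\max_{x\in\Gamma}\langle x,\vec{w}\rangle$; hence, by the supporting-line characterization of the interior of a planar convex body, the inclusion $\mathrm{int}(\Sigma)\subset\Omega\times\r\vec{v}=\pi^{-1}(\Omega)$ will follow once we show that for every interior point $p$ of $\Sigma$ and every unit horizontal vector $\vec{w}$,
$$\langle p,\vec{w}\rangle<\max_{x\in\Gamma}\langle x,\vec{w}\rangle.$$

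Assume this fails for some $p\in\mathrm{int}(\Sigma)$ and some unit $\vec{w}\perp\vec{v}$, and set $m:=\max_{x\in\Gamma}\langle x,\vec{w}\rangle$ and $c:=\max_{x\in\Sigma}\langle x,\vec{w}\rangle$, so that $c\ge\langle p,\vec{w}\rangle\ge m$. The maximum $c$ is attained at some $q\in\Sigma$; if $c>m$ then $q\notin\partial\Sigma$, while if $c=m$ then $p$ itself is an interior point with $\langle p,\vec{w}\rangle=c$. In either case the linear height function $x\mapsto\langle x,\vec{w}\rangle$ attains its maximum over $\Sigma$ at an interior point $q_0$. Consequently $\Sigma$ is tangent at $q_0$ to the plane $P:=\{x\in\r^3:\langle x,\vec{w}\rangle=c\}$, lies in the closed half-space $\{\langle\cdot,\vec{w}\rangle\le c\}$, and thus touches $P$ from one side at $q_0$. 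Since $\vec{w}$ is orthogonal to $\vec{v}$, the plane $P$ is parallel to $\vec{v}$ and therefore is a translating soliton. As $\Sigma$ is a translating soliton as well, the orientations at $q_0$ play no role, and the tangency principle forces $\Sigma$ and $P$ to coincide near $q_0$; by real analyticity and connectedness of $\Sigma$ this yields $\Sigma\subset P$, in particular $\Gamma\subset P$.

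This is absurd. Indeed $P$ is parallel to $\vec{v}$ whereas $\Pi$ is not, so $P\neq\Pi$; moreover $P$ and $\Pi$ cannot be parallel, since a plane parallel to a plane parallel to $\vec{v}$ is itself parallel to $\vec{v}$. Hence $P$ and $\Pi$ are distinct non-parallel planes and $P\cap\Pi$ is a straight line, which cannot contain the Jordan curve $\Gamma$. This contradiction proves the displayed inequality for all interior points and all horizontal directions, and hence $\mathrm{int}(\Sigma)\subset\Omega\times\r\vec{v}$.

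The argument is essentially a transcription of Theorem \ref{t2}(1), with the ambient foliation by planes parallel to $\Pi$ replaced by the foliation by all planes parallel to $\vec{v}$; the only point requiring care is the behaviour at the boundary of the convex hull, namely ruling out that an interior point of $\Sigma$ projects onto $\partial C$ rather than strictly outside $C$. This is absorbed into the case $c=m$ above, where the offending interior point $p$ itself realizes the maximum of the height function and the tangency argument applies verbatim.
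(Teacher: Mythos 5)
Your proof is correct and follows essentially the same route as the paper: the paper derives the corollary by sliding planes parallel to $\vec{v}$ toward $\Sigma$ as in item (1) of Theorem \ref{t2} and noting that the first touching point must be a boundary point, which is exactly your argument phrased as maximizing the linear functional $\langle\cdot,\vec{w}\rangle$ over $\Sigma$ and applying the tangency principle (with the orientation freedom available when $\lambda=0$) at an interior maximum. Your explicit treatment of the case $c=m$ and of the support-function characterization of $\mathrm{int}(\mathrm{conv}(\pi(\Gamma)))$ merely makes precise what the paper leaves implicit.
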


Theorem  \ref{t2} does not hold for a $\lambda$-translating soliton when $\lambda\not=0$ because we need to know which is the orientation on $\Sigma$ to apply the tangency principle. However, and motivated by the main result in \cite{ko} in the context of cmc surfaces, we have:  

\begin{theorem}\label{tko} Let $\Pi$ be a plane and let $\Gamma\subset\Pi$ be a Jordan curve. Denote by $D\subset\Pi$ the   domain bounded by $\Gamma$ and let $\mbox{ext}(D)=\Pi\setminus\overline{D}$. If $\Sigma$ is an embedded compact $\lambda$-translating soliton with boundary $\Gamma$ and $\Sigma\cap \mbox{ext}(D)=\emptyset$, then the interior of $\Sigma$ lies in one side of $\Pi$ or $\Sigma$ is contained in the plane $\Pi$.
\end{theorem}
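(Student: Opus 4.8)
The plan is to combine a topological reduction with the tangency principle applied to planes parallel to $\Pi$. After a rigid motion I would assume $\Pi=\{z=0\}$, with unit normal $\vec{a}=(0,0,1)$ and $v_{3}=\langle\vec{v},\vec{a}\rangle$, so that $\Pi$ is parallel to $\vec{v}$ precisely when $v_{3}=0$. Since $\psi$ maps $\partial\Sigma$ diffeomorphically onto $\Gamma$ and $\Sigma\cap\mbox{ext}(D)=\emptyset$, every point of $\Sigma$ lying in $\Pi$ that is not on $\Gamma$ is an interior point of $\Sigma$ mapped into the open disc $D$. The key reduction is then purely topological: since $\Sigma$ is connected, if no interior point of $\Sigma$ lies on $\Pi$, then $\mbox{int}(\Sigma)$ is a connected subset of $\r^{3}\setminus\Pi$ and hence is contained in one of the two open half-spaces bounded by $\Pi$, which is the first alternative of the statement. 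So it suffices to prove that if some interior point of $\Sigma$ lies on $\Pi$ (necessarily inside $D$), then $\Sigma\subset\Pi$.

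To prove this I would argue by contradiction, assuming $\Sigma\not\subset\Pi$. Because solutions of (\ref{mean1}) are real analytic, the height function $z|_{\Sigma}$ is analytic and not identically zero, so $\Sigma$ has points off $\Pi$ and $\Sigma\cap\Pi$ is a $1$-dimensional analytic subset of $\Sigma$. Let $t_{+}=\max_{\Sigma}z$ and $t_{-}=\min_{\Sigma}z$, so $t_{-}\le0\le t_{+}$ and at least one inequality is strict. Suppose $t_{+}>0$; then the plane $P_{+}=\{z=t_{+}\}$ is tangent to $\Sigma$ at an interior highest point $q$, and $\Sigma$ lies in $\{z\le t_{+}\}$, with $P_{+}$ above $\Sigma$ near $q$. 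Now $P_{+}$, with a suitable choice of unit normal, is itself a translating soliton, i.e.\ has constant $H_{\phi}$ equal to $\mp v_{3}/2$; taking that normal equal to the Gauss map of $\Sigma$ at $q$, one is in position to apply the interior tangency principle to $\Sigma$ and $P_{+}$ at $q$, which forces $\Sigma$ to coincide with $P_{+}$ in a neighbourhood of $q$, hence $\Sigma\subset P_{+}$ by analyticity --- impossible, since $\partial\Sigma=\Gamma\subset\Pi\neq P_{+}$. The same comparison at a lowest point when $t_{-}<0$, and a comparison at an interior contact point of $\Sigma$ with the plane $\Pi$ itself, should dispose of the remaining sub-cases, leaving only $\Sigma=\overline{D}\subset\Pi$. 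When $\Pi$ is parallel to $\vec{v}$ (so $\Pi$ is no longer a $\lambda$-translating soliton once $\lambda\neq0$) I would instead use the reflection $R$ across $\Pi$: since $R$ fixes $\vec{v}$, it carries (\ref{eq1}) into itself, so $R(\Sigma)$ is again a $\lambda$-translating soliton with boundary $\Gamma$; comparing $\Sigma$ with $R(\Sigma)$ by the interior and boundary versions of the tangency principle forces $\Sigma$ to be symmetric about $\Pi$, and then embeddedness together with $\Sigma\cap\mbox{ext}(D)=\emptyset$ leaves only $\Sigma\subset\Pi$.

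The hard part, I expect, is the orientation bookkeeping. For a translating soliton one reverses the Gauss map at will --- this is what makes the proofs of Theorem~\ref{t1} and Theorem~\ref{t2} work --- but for $\lambda\neq0$ the orientation $N$ is fixed by (\ref{eq1}), so at the extremal points one must check that $N$ is exactly the orientation for which the tangency principle compares $\Sigma$ with the parallel plane (or with $\Pi$) in the favourable direction, i.e.\ so that the required inequality between the values of $H_{\phi}$ on the two surfaces holds. This is precisely where the hypotheses that $\Sigma$ be embedded and that $\Sigma\cap\mbox{ext}(D)=\emptyset$ must enter: they confine $\Sigma$ over $\overline{D}$ and thereby determine the sign of the mean curvature, hence of $N$, at the highest and lowest points and at an interior contact with $\Pi$, and they are what rule out the pathological configurations --- $\Sigma$ folding back over $D$ being the main one --- in which no single plane comparison suffices and one has to run a full moving-plane/Alexandrov scheme. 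Analyticity of (\ref{mean1}) would be used once more to pass from the local coincidence of $\Sigma$ with a plane to the global conclusion.
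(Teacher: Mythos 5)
Your topological reduction and the idea of comparing $\Sigma$ with horizontal planes at extremal points are in the right direction, but the core of the argument is missing and one step, as stated, is wrong. At a single highest point $q$ with $z(q)=t_{+}>0$, the interior tangency principle applied to $\Sigma$ and the plane $P_{+}$ oriented by $N(q)$ does \emph{not} force coincidence: it only yields an inequality $\lambda<\mu$, where $\mu=-\langle N(q),\vec{v}\rangle/2$ is the $H_\phi$-value of that plane, and this inequality can perfectly well hold (a cap tangent from below to its top plane violates nothing). A contradiction arises only when two such inequalities with opposite senses are produced at two contact points carrying the \emph{same} orientation. That is exactly how the paper argues: the highest point $q$ above $\Pi$ and the lowest point $p$ below $\Pi$, both with $N=e_3$, give $\mu<\lambda$ and $\lambda<\mu$ respectively, which first rules out interior points on both sides of $\Pi$; only afterwards does one compare with $\Pi$ itself at an interior touching point in $D$, obtaining $\mu\leq\lambda$ against the normalization $\lambda\leq\mu$ and hence $\Sigma\subset\Pi$ by analyticity. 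Your plan instead tries to extract a contradiction from one extremal point at a time, which cannot work.

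The second gap is that you acknowledge but do not carry out the ``orientation bookkeeping'', and this is precisely where embeddedness and $\Sigma\cap \mbox{ext}(D)=\emptyset$ enter. The paper resolves it by an explicit construction: it closes $\Sigma$ up into an embedded closed surface $T=\Sigma\cup(\Omega(r)\setminus D)\cup\s^2_{-}(r)$ using a large disc of $\Pi$ and a lower hemisphere, takes $W$ to be the bounded component of $\r^3\setminus T$, and orients $\Sigma$ by the normal pointing out of $W$; it is this global device that forces $N(p)=N(q)=e_3$ at the lowest and highest points simultaneously. Merely asserting that the hypotheses ``determine the sign of $N$'' is not a proof. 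Finally, your separate treatment of the case in which $\Pi$ is parallel to $\vec{v}$ by reflecting across $\Pi$ is both unnecessary (the two-inequality argument is insensitive to whether $\mu=0$) and unjustified as sketched, since you produce no initial tangency point between $\Sigma$ and $R(\Sigma)$ to which the tangency principle could be applied.
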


\begin{proof}  After a change of coordinates, we suppose that $\Pi$ is the plane of equation $z=0$. Let $\s^1(r)\subset\Pi$ be a circle of radius $r>0$ and denote $\s^2_{-}(r)$ the lower halfsphere contained in the halfspace $z\leq 0$ with $\partial\s^2_{-}(r)=\s^1(r)$. Denote $\Omega(r)\subset \Pi$ the domain bounded by $\s^1(r)$. Since $\Sigma$ is a compact surface, let $r>0$ be sufficiently big such that $\Sigma\cap\{z\leq 0\}$ is contained in the domain of $\r^3$ bounded by $\Omega(r)\cup\s^2_{-}(r)$. Then 
$$T=\Sigma\cup(\Omega(r)\setminus D)\cup\s^2_{-}(r)$$
is an embedded closed surface of $\r^3$, possibly not smooth along $\Gamma\cup\s^1(r)$, that separates $\r^3$ in two connected componentes. Let $W\subset\r^3$ be the bounded component  and we take on $T$ the orientation pointing towards outside $W$. Let $N$ be the induced orientation on $\Sigma$. 

The proof of Th. \ref{tko}  is by contradiction. Suppose that the interior of $\Sigma$ has points in both sides of $\Pi$. Let $p,q\in\Sigma$ be the points of minimum and maximum height about $\Pi$. In particular, $z(p)<0<z(q)$.   As $N$ points outside $W$, then $N(p)=N(q)=e_3$, where $e_3=(0,0,1)$. We consider the affine tangent planes $T_p\Sigma$ and $T_q\Sigma$ oriented by $e_3$ and thus both planes are $\mu$-translating solitons with $\mu=-\langle e_3,\vec{v}\rangle/2$. Using the tangency principle, and since $T_p\Sigma$ lies below $\Sigma$ around $p$, we have $\mu<\lambda$. Similarly, comparing $T_q\Sigma$ and $\Sigma$ at $q$, we have $\lambda<\mu$. This gives a contradiction. 

As a conclusion, $\Sigma$ lies in one side of $\Pi$. Without loss of generality, we suppose $\lambda\leq\mu$. The above argument proves that $\Sigma$ lies in the halfspace $z\geq 0$. We have two possibilities. If the interior of $\Sigma$ lies in one side of $\Pi$,  the result is proved. On the contrary,  there exists $p\in\Sigma\cap D$, and thus $p$ is an interior point, $N(p)=e_3$ and the tangency principle says $\mu\leq\lambda$. This implies $\lambda=\mu$ and $\Sigma\subset\Pi$ by analiticity. \end{proof}

\begin{remark}\label{rko}  Theorem \ref{tko} holds for a more general case when $\Gamma$ is formed by a finite number of disjoint Jordan curves, namely, $\Gamma=\bigcup_{i=1}^m\Gamma_i$, all them contained in a plane $\Pi$. Following \cite[Lem 1]{ko}, the outside of $\Gamma$ is $G=\bigcap_{i=1}^m (\Pi\setminus \overline{D_i})$, where $D_i\subset\Pi$ is the   domain bounded by $\Gamma_i$. Then Th. \ref{tko} holds if we replace $\Sigma\cap \mbox{ext}(D)=\emptyset$ by 
$\Sigma\cap G=\emptyset$.
\end{remark}

  \begin{remark}\label{rside} We can say in what side is $\Sigma$. Let $\eta$ be a fix   orientation on $\Pi$ and let $\mu=-\langle\eta,\vec{v}\rangle/2$. If $\lambda<\mu$, then $\Sigma$ lies in the halfspace determined by $\Pi$ where $\eta$ points.  \end{remark}

 
As a particular case of Th. \ref{tko} and Rem. \ref{rko}, we have:

\begin{corollary}\label{cside} Let $\Pi\subset\r^3$ be a plane. If $\Sigma$ is a compact $\lambda$-translating soliton which is a graph on $\Pi$ and $\partial\Sigma\subset\Pi$, then $\mbox{int}(\Sigma)$ lies in one side of $\Pi$ or $\Sigma$ is a subset of $\Pi$.
\end{corollary}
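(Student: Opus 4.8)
The corollary is stated as a particular case of Theorem \ref{tko} and Remark \ref{rko}, so the plan is just to check that a compact graph over $\Pi$ automatically meets the geometric hypothesis of that result, and then to quote it. First I would change coordinates so that $\Pi$ is the plane $z=0$, identify $\Pi$ with $\r^2$, and let $\pi\colon\r^3\to\Pi$ denote the vertical projection. Being a compact graph on $\Pi$ means that there is a bounded domain $\Omega\subset\Pi$ and a smooth function $u$ on $\overline{\Omega}$ with $\Sigma=\{(x,y,u(x,y)):(x,y)\in\overline{\Omega}\}$; in particular $\Sigma$ is embedded and $\pi|_\Sigma$ is a homeomorphism onto $\overline{\Omega}$ that carries $\partial\Sigma$ onto $\partial\Omega$. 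Since $\partial\Sigma\subset\{z=0\}$, the boundary values of $u$ vanish and, as subsets of $\Pi$, one has $\partial\Sigma=\partial\Omega=:\Gamma$. Writing $\Gamma=\Gamma_1\cup\cdots\cup\Gamma_m$ for the connected components, each $\Gamma_i$ is a Jordan curve, because $\partial\Sigma$ is a disjoint union of smooth embedded closed curves and $\pi$ is injective on $\Sigma$.

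Next I would verify that $\Sigma$ does not meet the exterior of $\Gamma$. Let $D_i\subset\Pi$ be the bounded domain enclosed by $\Gamma_i$ and, as in Remark \ref{rko}, set $G=\bigcap_{i=1}^m(\Pi\setminus\overline{D_i})$. If $p\in\Sigma\cap\Pi$, then $z(p)=0$, hence $p=\pi(p)\in\pi(\Sigma)=\overline{\Omega}$; but $\overline{\Omega}$ is the closure of the region bounded by $\Gamma$, so by plane topology $\overline{\Omega}\cap G=\emptyset$ (cf. \cite[Lem 1]{ko}), and therefore $p\notin G$. Hence $\Sigma\cap G=\emptyset$. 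In the main case, where $\partial\Sigma$ is connected, this is just the obvious observation that $\Sigma\cap\mbox{ext}(D)=\emptyset$ for $D=\Omega$ the topological disk bounded by $\Gamma$.

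Finally, $\Sigma$ is an embedded compact $\lambda$-translating soliton whose boundary $\Gamma$ lies in the plane $\Pi$, and $\Sigma\cap G=\emptyset$, so Theorem \ref{tko} together with Remark \ref{rko} applies and gives that $\mbox{int}(\Sigma)$ lies in one side of $\Pi$ or that $\Sigma$ is a subset of $\Pi$, which is precisely the assertion. There is no real obstacle here; the only point deserving a word of care is the bookkeeping of the second paragraph --- that for a graph the projected boundary is genuinely all of $\Gamma$ and the projection of $\Sigma$ stays inside the closure of the region bounded by $\Gamma$ --- and this is elementary, requiring nothing analytic beyond Theorem \ref{tko} itself.
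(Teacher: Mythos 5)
Your proposal is correct and follows exactly the route the paper intends: the corollary is stated as a direct particular case of Theorem \ref{tko} and Remark \ref{rko}, and your verification that a compact graph on $\Pi$ with $\partial\Sigma\subset\Pi$ is embedded, has boundary a finite union of Jordan curves in $\Pi$, and satisfies $\Sigma\cap G=\emptyset$ (since $\pi(\Sigma)=\overline{\Omega}$ avoids the outside of $\Gamma$) is precisely the bookkeeping needed. The paper gives no further argument, so your write-up matches, and is in fact slightly more explicit than, the paper's own treatment.
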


 \section{An area estimate for graphs}
 
 In this section we give an estimate of the area of a compact $\lambda$-translating graph whose boundary lies in a plane orthogonal to the density vector. Estimates for the area for translating solitons were obtained in \cite{sh} for  the intersection of a convex translating graph with Euclidean balls. Our result consider the intersection of a $\lambda$-translating graph with halfspaces of $\r^3$.

\begin{theorem}\label{tarea} Let $\Sigma$ be a compact $\lambda$-translating soliton    whose boundary is contained in a plane $\Pi$ orthogonal to the density vector $\vec{v}$. Suppose $\Sigma$ is a graph on $\Pi$ and let us orient $\Sigma$ by the unit normal vector $N$ such that  $\langle N,\vec{v}\rangle>0$. Then
\begin{equation}\label{ine}
4\pi h\leq  |1+2\lambda|\ A,
\end{equation}
where  $h$ denotes the height of $\Sigma$ with respect to $\Pi$ and $A$ is the area of $\Sigma$. In particular, if $\lambda\not=-1/2$, then 
$$ \frac{4\pi}{|1+2h|}h\leq A.$$
\end{theorem}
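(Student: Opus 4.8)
The plan is to exploit the Laplace--Beltrami identity \eqref{lapla} together with the Gauss map viewed as a map into the unit sphere, and to compare the weighted area with the area of the graph over $\Pi$. After a change of coordinates we may assume $\vec{v}=(0,0,1)$ and $\Pi=\{z=0\}$, and (replacing $\Sigma$ by a graph lying, say, in $z\ge 0$ by Corollary \ref{cside}) we orient $\Sigma$ so that $\langle N,\vec{v}\rangle=\langle N,e_3\rangle>0$ everywhere on $\mathrm{int}(\Sigma)$. Since $\Sigma$ is a graph over a domain $D\subset\Pi$, the function $h=\max_\Sigma z - \min_\Sigma z$ is attained and equals the oscillation of the height function $u=\langle\psi,\vec v\rangle$.

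The key step is to integrate \eqref{lapla}. Writing \eqref{lapla} as $\Delta u = \langle N,\vec v\rangle\,(2\lambda+\langle N,\vec v\rangle)$, I would instead integrate the Gaussian curvature. The cleanest route: since $\Sigma$ is a graph with $\langle N,e_3\rangle>0$, the Gauss map $N:\Sigma\to\s^2$ has image in the open upper hemisphere, and for the lowest point $p$ and highest point $q$ of $\Sigma$ one has $N(p)=N(q)=e_3$; more usefully, the Gauss map composed with stereographic-type projection covers the ``cap'' of directions realized by the graph. The standard device for area estimates of this kind (cf. \cite{sh}) is: apply the divergence theorem to a suitable vector field built from $N$ and $\vec v$. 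Concretely, consider $X = N - \langle N,\vec v\rangle\,\vec v$, the tangential part of $\vec v$ up to sign; compute $\mathrm{div}_\Sigma$ of a primitive, or more directly use that for the height function $u$, $|\nabla u|^2 = 1-\langle N,\vec v\rangle^2$ and apply the coarea formula to $u$ over $\Sigma$. Integrating $\Delta u$ over $\Sigma$ and using the divergence theorem gives $\int_{\partial\Sigma}\langle\nu,\vec v\rangle\,ds = -\int_\Sigma \langle N,\vec v\rangle(2\lambda+\langle N,\vec v\rangle)\,dA$; since $\partial\Sigma\subset\Pi$ and $\vec v\perp\Pi$, the left side vanishes, so $\int_\Sigma\langle N,\vec v\rangle^2\,dA = -2\lambda\int_\Sigma\langle N,\vec v\rangle\,dA$. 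This is not yet the inequality; it must be combined with a lower bound for the ``vertical projection'' of the Gauss image.

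I would then bring in the $4\pi$: the factor $4\pi$ is the area of $\s^2$, so the natural source is that the Gauss map of the graph, together with its mirror image, covers $\s^2$ (or a hemisphere), giving $\int_\Sigma |K|\,dA \ge$ (area swept). The precise mechanism I expect the author uses is: slice $\Sigma$ by the horizontal planes $z=t$ for $t\in[\,\min u,\max u\,]$; by the coarea formula $A \ge \int_{\Sigma}|\nabla u|\,dA / \sup|\nabla u| $ is too weak, so instead one estimates the length of the level sets $u^{-1}(t)$ from below by an isoperimetric-type or Gauss--Bonnet argument on the region $\{u\ge t\}$, whose boundary curvature is controlled by $2H = 2\lambda+\langle N,\vec v\rangle$ and whose normal curvature bound forces the total turning to be large. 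Summing over $t$ via coarea, $\int_{\min u}^{\max u}\mathrm{length}(u^{-1}(t))\,dt = \int_\Sigma |\nabla u|\,dA \le A$, while Gauss--Bonnet on $\{u\ge t\}$ gives $\int_{u^{-1}(t)}\kappa_g\,ds = 2\pi\chi - \int_{\{u\ge t\}}K\,dA$, and a pointwise bound $|\kappa_g|\le |H|\cdot(\text{something})$ combined with $|2H|\le |1+2\lambda|$ (which holds since $\langle N,\vec v\rangle\in(0,1]$, giving $2\lambda < 2H \le 2\lambda+1$, hence $|2H|\le\max(|2\lambda|,|1+2\lambda|)$; one checks $|1+2\lambda|$ is the relevant bound in the regime where the estimate is nonvacuous) yields the factor $|1+2\lambda|\,A$ on the right and an integrated $2\pi$ at each end ($t=\min u$ and the degeneration as $t\to\max u$), producing $4\pi h$ on the left.

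The main obstacle I anticipate is precisely this geometric covering/length estimate — controlling $\mathrm{length}(u^{-1}(t))$ or equivalently the total geodesic curvature of the level sets in terms of the mean curvature bound $|2H|\le|1+2\lambda|$, and accounting correctly for the two extreme levels to extract the sharp constant $4\pi$ rather than $2\pi$. Getting $|2H|\le|1+2\lambda|$ itself is the elementary observation that $0<\langle N,\vec v\rangle\le 1$ forces $2H=2\lambda+\langle N,\vec v\rangle$ to lie in $(2\lambda,2\lambda+1]$, so $|2H|\le|1+2\lambda|$ provided $\lambda\ge 0$ or $\lambda\le -1/2$ — and the statement's restriction $\lambda\ne -1/2$ for the second inequality (together with implicitly $\lambda\ge 0$ or $\lambda\le-1/2$) fits this. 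The final inequality $\tfrac{4\pi}{|1+2h|}h\le A$ as printed appears to have a typo ($h$ for $\lambda$); I would state and prove it as $\tfrac{4\pi h}{|1+2\lambda|}\le A$, which is immediate from \eqref{ine} by dividing through.
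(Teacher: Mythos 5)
Your proposal does not contain a proof: the step you yourself flag as ``the main obstacle'' --- producing the factor $4\pi$ --- is exactly the missing idea, and the routes you sketch for it (Gauss map covering $\s^2$, Gauss--Bonnet on $\{u\ge t\}$ with a pointwise bound $|\kappa_g|\le |H|\cdot(\text{something})$, a bound $|2H|\le|1+2\lambda|$) are not the mechanism and would not close. In the paper the $4\pi$ comes from the \emph{classical planar isoperimetric inequality} applied to the level sets of the height function, not from curvature integrals. The actual argument is: set $g=z$, let $\Sigma(t)=\{g\le t\}$ and $\Gamma(t)=\{g=t\}$; the coarea formula gives $A'(t)=\int_{\Gamma(t)}|\nabla g|^{-1}ds_t$, Cauchy--Schwarz gives $L(t)^2\le A'(t)\int_{\Gamma(t)}|\nabla g|\,ds_t$, and $|\nabla g|=\langle\nu^t,e_3\rangle$ is the flux density of the outward conormal. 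Integrating \eqref{lapla} over $\Sigma(t)$ and using the divergence theorem for the constant field $e_3$ evaluates that flux as $2\lambda\int_{\Sigma(t)}\langle N,e_3\rangle+\int_{\Sigma(t)}\langle N,e_3\rangle^2\le(1+2\lambda)\,\mathrm{area}(\Omega(t))$ (here $\langle N,e_3\rangle^2\le\langle N,e_3\rangle$ because $\Sigma$ is a graph, which is the only place the graph hypothesis enters quantitatively). Combining with $L(t)^2\ge 4\pi\,\mathrm{area}(\Omega(t))$ yields $4\pi\le(1+2\lambda)A'(t)$ for a.e.\ $t$, and integrating over $t\in[-h,0]$ gives \eqref{ine}. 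None of this appears in your proposal.

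There is also a concrete error in the one computation you do carry out: you claim $\int_{\partial\Sigma}\langle\nu,\vec v\rangle\,ds=0$ ``since $\partial\Sigma\subset\Pi$ and $\vec v\perp\Pi$.'' This is false: $\nu$ is the conormal, a unit vector tangent to $\Sigma$ (not to $\Pi$) and normal to $\partial\Sigma$, so $\langle\nu,\vec v\rangle$ vanishes only where $\Sigma$ is tangent to $\Pi$, which Theorem \ref{t2}/Corollary \ref{cside} in fact rule out. This boundary flux is precisely the quantity the proof must estimate, not discard; the identity you derive from setting it to zero, $\int_\Sigma\langle N,\vec v\rangle^2=-2\lambda\int_\Sigma\langle N,\vec v\rangle$, is not generally true. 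Your observation that the final displayed inequality should read $4\pi h/|1+2\lambda|\le A$ (a typo $h$ for $\lambda$) is correct.
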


\begin{proof}  After a change of coordinates, we suppose that $\vec{v}=e_3=(0,0,1)$ and   $\Pi$ is the plane of equation $z=0$. By Cor. \ref{cside} we know that the interior of $\Sigma$ lies in one side of $\Pi$ if $\lambda\not=-1/2$ or $\Sigma$ is included in $\Pi$ if $\lambda=-1/2$. In the latter case, the inequality (\ref{ine}) holds trivially. 

Without loss of generality,   we assume $\lambda>-1/2$: a similar the argument holds if   $\lambda<-1/2$. Then Rem. \ref{rside} asserts that the interior of $\Sigma$ lies in the halfspace $z<0$ and thus  the height $h$ of $\Sigma$  is $h=-\min\{z(p):p\in \Sigma\}$.  Consider the height function $g:\Sigma\rightarrow\r$, $g(p)=z(p)=\langle p,e_3\rangle$. For each $t<0$, let $A(t)$ be the area of $\Sigma(t)=\{p\in \Sigma:g(p)\leq t\}$ and let $\Gamma(t)=\{p\in \Sigma:g(p)=t\}$. By the coarea formula (\cite[Th. 5.8]{sa}) we have
$$A'(t)=\int_{\Gamma(t)}\frac{1}{|\nabla g|} ds_t,\quad (t\in \mathcal{R}),$$
 where  $ds_t$ is the line element of $\Gamma(t)$ and $\mathcal{R}$ is the set of all regular values of $g$. If $L(t)$ denotes the length of the planar curve $\Gamma(t)$, then the Schwarz inequality yields
\begin{equation}\label{l2}
L(t)^2\leq\int_{\Gamma(t)}|\nabla g| ds_t \int_{\Gamma(t)}\frac{1}{|\nabla g|}ds_t=A'(t)\int_{\Gamma(t)}|\nabla g| ds_t,\quad (t\in\mathcal{R}).
\end{equation}
Since $\Gamma(t)$ is a level  curve of $g$, we have
$$|\nabla g|^2=\langle\nu^t,e_3\rangle^2,$$
where $\nu^t$ is the unit outward conormal vector of $\Sigma(t)$ along $\Gamma(t)$. As $\Sigma(t)$ lies below the plane $\Pi(t)=\{p\in\r^3: g(p)=t\}$, then 
$$|\nabla g|=\langle\nu^t,e_3\rangle$$
along $\Gamma(t)$.  Now \eqref{l2} writes as
\begin{equation}\label{l3}
L(t)^2\leq A'(t)\int_{\Gamma(t)}\langle\nu^t,e_3\rangle ds_t.
\end{equation}
The curve $\Gamma(t)$, possible non-connected, bounds a union of finitely compact connected domains in $\Pi(t)$, namely,  $\Omega(t)=\Omega_1(t)\cup\ldots \cup\Omega_{n_t}(t)$. In order to estimate the right-hand side of (\ref{l3}), we use the divergence theorem in Eq.  (\ref{lapla}), and   replace  $K$ by  $\Omega(t)$ in Eq. (\ref{nn}). Then we obtain
\begin{eqnarray*}
\int_{\Gamma(t)}\langle\nu^t,e_3\rangle ds_t &=&2\lambda\int_{\Sigma(t)} \langle
N,e_3\rangle\ d\Sigma(t)+\int_{\Sigma(t)}\langle N,e_3\rangle^2 d\Sigma(t)\\
&\leq&2\lambda\int_{\Sigma(t)} \langle
N,e_3\rangle\ d\Sigma(t)+\int_{\Sigma(t)}\langle N,e_3\rangle d\Sigma(t)\\
&=&(1+2\lambda) \int_{\Sigma(t)}\langle N,e_3\rangle d\Sigma(t)\\
&=&(1+2\lambda) \sum_{i=1}^{n_t}\mbox{area}(\Omega_i(t)) =(1+2 \lambda)\mbox{ area}(\Omega(t)).
\end{eqnarray*}
In the first inequality we have used that   $\langle N,e_3\rangle^2\leq\langle N,e_3\rangle$   holds in the graph $\Sigma$.  By substituting into \eqref{l3}, we have 
\begin{equation}\label{l4}
L(t)^2\leq (1+2\lambda) \mbox{ area}(\Omega(t)) A'(t). 
\end{equation}
 If $L_i(t)$ is the length of the boundary of $\Omega_i(t)$, the classical  isoperimetric inequality yields
 $$L(t)^2\geq\sum_{i=1}^{n_t}L_i(t)^2\geq 4\pi\sum_{i=1}^{n_t}\mbox{area}(\Omega_i(t)) =4\pi \ \mbox{area}(\Omega(t)).$$
This inequality and  (\ref{l4}) gives
$$4\pi\leq (1+2\lambda) A'(t),\quad (t\in\mathcal{R}.)$$
By integrating this inequality from $t=-h$ to $t=0$, we conclude
$$4\pi h\leq (1+2\lambda) (A(0)-A(-h))=(1+2\lambda)A.$$

\end{proof}
We point out that the statement of Th. \ref{tarea} can be formulated if we choose the orientation on the graph so $\langle N,\vec{v}\rangle<0$, by replacing the term $|1+2\lambda|$ by $|1-2\lambda|$.  Finally, in the particular case $\lambda=0$, the   estimate (\ref{ine}) establishes that the area $A$ of a translating soliton  with planar boundary contained in a plane orthogonal to $\vec{v}$ satisfies  $4\pi h<A$.

\section{Symmetries of a compact $\lambda$-translating soliton}\label{tsym}
In this section we give some results answering whether a compact $\lambda$-translating soliton inherits the symmetries of its boundary.  In this context, we use the  Alexandrov reflection method \cite{al} which, by means of reflections about a uniparametric family of planes  and  the tangency principle, allows to compare the given surface with itself.

\begin{theorem}\label{t3} Let $\Gamma$ be a Jordan curve contained in a plane $\Pi$ which is not parallel to $\vec{v}$.  Suppose:
\begin{enumerate}
\item  $\Gamma$ is symmetric about the reflection across a   plane $P$ parallel to the density vector $\vec{v}$,
\item   $P$ separates  $\Gamma$ in  two graphs on the straight-line $\Pi\cap P$. 
\end{enumerate}
If $\Sigma$ is an embedded  compact $\lambda$-translating soliton  with boundary $\Gamma$ and $\Sigma$ lies in one side of $\Pi$, then   $P$ is a symmetry plane of $\Sigma$.
\end{theorem}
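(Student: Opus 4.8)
The plan is to use the Alexandrov reflection method with respect to the uniparametric family of planes parallel to $P$. The idea is standard for constant mean curvature surfaces, but the crucial point here is to verify that the reflection of Eq.~\eqref{eq1} is compatible with the sweeping, which it is precisely because $P$ (and every plane parallel to $P$) is parallel to $\vec{v}$: a reflection about a plane parallel to $\vec{v}$ preserves the term $\langle N,\vec{v}\rangle$, hence sends a $\lambda$-translating soliton to a $\lambda$-translating soliton. Recall that $\Sigma$ is embedded, lies on one side of $\Pi$, and $\partial\Sigma=\Gamma$; after a change of coordinates we may take $\Pi=\{z=0\}$, $\vec{v}=e_3$, $\Sigma\subset\{z\ge 0\}$, and $P=\{x=0\}$.

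First I would set up the family of planes $P(t)=\{x=t\}$, all parallel to $P=P(0)$ and parallel to $\vec{v}$. Since $\Sigma$ is compact, for $t$ large $P(t)\cap\Sigma=\emptyset$. Decrease $t$ until the first contact; for $t$ slightly below this value, reflect the part of $\Sigma$ with $x\ge t$ across $P(t)$ to obtain $\Sigma(t)^*$, and compare it with the part of $\Sigma$ with $x\le t$. As $t$ decreases, I claim the reflected piece stays a graph over the appropriate region and remains above the corresponding piece of $\Sigma$ until one of two things happens: either the reflected piece becomes interior-tangent to $\Sigma$ at an interior point, or the boundary configuration forces tangency. Hypothesis (2), that $P$ separates $\Gamma$ into two graphs over the line $\ell=\Pi\cap P=\{x=0,z=0\}$, is exactly what guarantees that the Alexandrov procedure does not stop prematurely because of the boundary: when $t$ reaches $0$, the reflected boundary curve lies ``inside'' the original boundary in the slab, so no boundary obstruction occurs before $t=0$. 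At $t=0$ the reflected piece $\Sigma\cap\{x\ge 0\}$ reflected across $P$ and the piece $\Sigma\cap\{x\le 0\}$ share the boundary arc in $\Gamma$ (by symmetry of $\Gamma$) with the same tangent plane there; applying the boundary version of the tangency principle (Proposition, boundary case — note that $T_p\partial\Sigma_1=T_p\partial\Sigma_2$ holds along $\Gamma\cap P$ because $\Gamma$ is symmetric about $P$), together with analyticity of solutions of Eq.~\eqref{mean1}, forces the reflected piece to coincide with $\Sigma\cap\{x\le 0\}$, i.e. $P$ is a symmetry plane.

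The technical core — and the step I expect to be the main obstacle — is proving that the reflection can be pushed all the way down to $t=0$ without the graph property degenerating and without a premature interior tangency giving symmetry about some $P(t_0)$ with $t_0>0$ (which would contradict the fact that a symmetry plane must contain $\Gamma$, hence must be $P$ itself since $\Gamma$ is connected and symmetric only about $P$ among planes in this family). One must argue carefully that at the first interior tangency the two $\lambda$-translating solitons have coinciding Gauss maps — this is where embeddedness and the one-sidedness of $\Sigma$ with respect to $\Pi$ enter, ensuring the reflected piece sits above (with respect to a consistent normal) the original piece so the tangency principle applies with the right inequality. Concretely, since $\Sigma$ is a one-sided embedded surface spanning $\Gamma$, the maximum-principle comparison at an interior contact point yields $\lambda\le\lambda$ trivially but with the strict part of the tangency principle it yields coincidence on an open set, hence globally by analyticity; if this happened for some $t_0>0$ then the symmetry plane $P(t_0)$ would have to contain $\partial\Sigma=\Gamma$, forcing $\Gamma\subset P(t_0)$, impossible since $\Gamma$ is a Jordan curve symmetric about $P$ and not contained in any plane parallel to $\vec{v}$ (it lies in $\Pi$, which meets $P(t_0)$ in a line). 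Thus the only admissible stopping value is $t=0$, and the boundary tangency principle closes the argument.

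One should also record the routine reductions: that $\Sigma$ is not contained in $\Pi$ (otherwise it is planar and $P$ trivially a symmetry plane since $\Gamma\subset P\cap\Pi$... — actually if $\Sigma\subset\Pi$ then $\Sigma$ is the planar domain bounded by $\Gamma$, which inherits the symmetry of $\Gamma$, so the conclusion is immediate), so we may assume $\mathrm{int}(\Sigma)$ lies strictly in $\{z>0\}$ by Cor.~\ref{cside}; and that the reflection $\Sigma^*$ across any $P(t)$ is again an embedded compact $\lambda$-translating soliton with boundary $\Gamma^*$ lying in $\Pi$ on the same side. With these in place the Alexandrov sweep proceeds exactly as in the CMC case, the only genuinely new input being the compatibility of Eq.~\eqref{eq1} with reflections about planes parallel to $\vec{v}$, already noted in the Preliminaries.
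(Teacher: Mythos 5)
Your overall strategy is the paper's: Alexandrov reflection with the family of planes $P_t=\{x=t\}$ parallel to $P$ and to $\vec{v}$, using embeddedness and one-sidedness to build the bounded region $W$ and a consistent normal, and using hypothesis (2) to keep the reflected boundary away from $\Gamma\setminus P_t$ during the sweep. However, your endgame at $t=0$ has a genuine gap. Reaching $t=0$ without obstruction only gives the one-sided inclusion $\Sigma(0)^*\subset\overline{W}$, and that alone does not imply symmetry. Your claim that at $t=0$ the reflected piece and $\Sigma\cap\{x\le 0\}$ ``share the boundary arc \ldots with the same tangent plane there'' is unjustified: two surfaces sharing a boundary curve need not be tangent along it, and at a fixed point $p\in\Gamma\cap P$ the tangent plane of the reflected piece is $R(T_p\Sigma)$, which equals $T_p\Sigma$ only when $T_p\Sigma$ is invariant under the reflection $R$ (e.g.\ when it contains the direction orthogonal to $P$) --- this is not automatic. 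So the boundary version of the tangency principle cannot be invoked at $t=0$ as you do. The standard repairs are either (a) the paper's route: argue by contradiction, so that non-symmetry produces two points $q_1,q_2$ on a line orthogonal to $P$ with $\mathrm{dist}(q_1,P)>\mathrm{dist}(q_2,P)$, which forces the sweep to stop at some $t_2>0$ where a genuine interior or boundary-orthogonal tangency occurs, whence $P_{t_2}$ is a symmetry plane of $\Sigma$ --- impossible; or (b) run the sweep from both sides and combine the two inclusions.

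A secondary error: your justification that no $P(t_0)$ with $t_0>0$ can be a symmetry plane --- ``a symmetry plane must contain $\Gamma$'' --- is false ($P$ itself is a symmetry plane of $\Gamma$ and does not contain it). The correct reason, and the one the paper uses, is that a symmetry plane of $\Sigma$ must leave $\Gamma=\partial\Sigma$ invariant, and no plane $x=t_0\neq 0$ does: $\Gamma$ is compact and already symmetric about $x=0$, so invariance under a second parallel reflection would make $\Gamma$ invariant under a nontrivial translation. This is easily fixed, but as written both the premature-stop step and the $t=0$ step rest on incorrect assertions, so the proof does not close.
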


\begin{proof} After a change of coordinates, we suppose $\vec{v}=(0,0,1)$.  Let us observe that the plane $\Pi$ is not necessarily horizontal (i.e. orthogonal to the direction $\vec{v}$).  After a rotation about the $z$-axis and up to a horizontal translation, we suppose that $P$ is the plane of equation $x=0$.  If $\Omega\subset\Pi$ is the domain bounded by $\Gamma$, the  embeddedness of $\Sigma$ ensures that  $\Sigma\cup\Omega$ defines a closed surface without boundary in $\r^3$, and possibly  non-smooth along $\Gamma$. In particular, $\Sigma\cup\Omega$ separates $\r^3$ in two connected components and we denote by    $W$ the bounded component. We orient $\Sigma$ by the Gauss map $N$ that points towards $W$.   

The proof is by contradiction and it is standard. By completeness, we give an outline on it. Suppose that $P$ is not a plane of symmetry of $\Sigma$, in particular, there exist two points $q_1, q_2\in \Sigma\setminus\Gamma$ such that the line $\overline{q_1q_2}$ joining $q_1$ to $q_2$ is orthogonal to $P$, $q_1$ and $q_2$ are in opposite sides of $P$ and $\mbox{dist}(q_1,P)>\mbox{dist}(q_2,P)$. Without loss of generality, suppose $x(q_2)<0<x(q_1)$. Then the symmetric point of $q_1$ about $P$, say $q_1^*$, satisfies $x(q_1^*)<x(q_2)$.  For any $t\in\r$, denote $P_t$ the plane of equation $x=t$. Let  $\Sigma(t)^{-}=\Sigma\cap \{x\leq t\}$, $\Sigma(t)^{+}=\Sigma\cap \{x\geq t\}$ and $\Sigma(t)^*$ the reflection of $\Sigma(t)^{+}$ about $P_t$. We notice that the reflection about $P_t$ preserves the value of $H_\phi$ as well as this reflection leaves invariant as a subset, the boundary plane $\Pi$. For $t$ sufficiently big and since $\Sigma$ is compact, we have $P_t\cap \Sigma=\emptyset$. Then we move $P_t$ towards $\Sigma$ by letting $t\searrow 0$, until the first contact point with $\Sigma$ at the time $t_1>0$. Then we move slightly more $P_{t_1}$, $t<t_1$, and we reflect $\Sigma(t)^{+}$.  The embeddness of $\Sigma$ and the fact that $\Sigma$ lies below $\Pi$ assures the existence of $\epsilon>0$ such that $\mbox{int}(\Sigma(t)^*)\subset W$ for every $t\in (t_1-\epsilon,t_1)$. By the compactness, there exists $t_2\geq 0$ with $t_2<t_1$, such that $\mbox{int}(\Sigma(t)^*)$ lies outside $W$ for any $t<t_2$. In fact, $t_2>0$ by the existence of the points $q_1$ and $q_2$ and $0<x(q_1^*)<x(q_2)$. Furthermore, and because $\Sigma(t)^{+}$ is a graph of $P_t$ for $t>t_2$ and $\Gamma\cap\{x>0\}$ is also a graph on the line $P\cap \Pi$, we have $\partial \Sigma(t_2)^*\cap \Gamma\subset P_{t_2}$. Then we have two possibilities:
\begin{enumerate}
\item There exists $p\in \mbox{ int}(\Sigma(t_2)^*)\cap  \mbox{ int} (\Sigma(t_2)^{-})$.  Since $p$ is an interior point, $\Sigma(t_2)^*$ and $\Sigma(t_2)^{-}$ are tangent at $p$ and the orientations of both surfaces agree at $p$ because they point towards $W$.  The tangency principle and the analyticity of $\Sigma$ imply that $\Sigma(t_2)^*=\Sigma(t_2)^{-}$, and thus,  $P_{t_2}$ is a plane of symmetry of $\Sigma$: a contradiction because $\Gamma$ is not invariant by reflections across $P_{t_2}$.
\item The surface $\Sigma$ is orthogonal to $P_{t_2}$ at some point $p\in\partial \Sigma(t_2)^{*}\cap \partial \Sigma(t_2)^{-}$. We now use the boundary version of the tangency principle and we conclude that $P_{t_2}$ is a plane of symmetry of $\Sigma$, a contradiction again.
\end{enumerate}
\end{proof}

The result may not be true in case that $P$ separates $\Gamma$ in two symmetric pieces that were not graphs on $\Pi\cap P$ because it has been crucial in the above proof to prevent the case that the contact point $p$ could belong to $(\partial \Sigma(t_2)^*\cap\Gamma)\setminus P_{t_2}$. 

A particular case of Th. \ref{t3} is when  $\Gamma$ is a circle contained in a  plane $\Pi$ orthogonal to $\vec{v}$.

\begin{corollary}\label{c2} Let $\Gamma$ be a circle contained in a plane $\Pi$ orthogonal to $\vec{v}$. The only embedded compact $\lambda$-translating soliton with boundary $\Gamma$ that  lies in one side of $\Pi$ is a rotational surface whose rotation axis is parallel to $\vec{v}$.
\end{corollary}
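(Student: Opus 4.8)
\textit{Proof proposal.} The plan is to apply Theorem \ref{t3} to the whole one-parameter family of planes containing the axis of the circle $\Gamma$, and then to invoke the elementary fact that a surface which is symmetric under reflection in every plane through a fixed line is a surface of revolution about that line. After a rigid motion I would take $\vec v=e_3=(0,0,1)$, so that $\Pi$ becomes the plane $\{z=0\}$, and I may assume $\Gamma$ is the circle of radius $R>0$ centered at the origin of $\Pi$. For each $\theta\in[0,\pi)$ let $P_\theta$ be the plane through the $z$-axis whose trace on $\Pi$ is the diameter line making angle $\theta$ with the $x$-axis. Each $P_\theta$ contains the direction $e_3$, hence is parallel to $\vec v$, so it is a candidate for the role of the plane $P$ in Theorem \ref{t3}.

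Next I would check the two hypotheses of Theorem \ref{t3} for $P_\theta$. Hypothesis (1) holds because the reflection across $P_\theta$ fixes the $z$-axis pointwise and maps the circle $\Gamma$ onto itself. Hypothesis (2) holds because $P_\theta$ cuts $\Gamma$ into two semicircles, and each semicircle is a graph over the diameter $\Pi\cap P_\theta$. Moreover $\Pi$ is not parallel to $\vec v$ (indeed $\Pi\perp\vec v$), and by assumption $\Sigma$ is embedded, compact, has boundary $\Gamma$, and lies in one side of $\Pi$. Hence Theorem \ref{t3} applies and gives that $P_\theta$ is a plane of symmetry of $\Sigma$, for every $\theta\in[0,\pi)$ (the degenerate case $\Sigma\subset\Pi$, i.e.\ $\Sigma$ a flat disk, is trivially rotational).

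Finally I would conclude the rotational symmetry. The composition of the reflection across $P_0$ with the reflection across $P_\theta$ is the rotation about the $z$-axis by angle $2\theta$; as $\theta$ ranges over $[0,\pi)$ one obtains every rotation about the $z$-axis, and each of these leaves $\Sigma$ invariant since $\Sigma$ is invariant under each individual reflection. Therefore $\Sigma$ is a surface of revolution with axis the $z$-axis, which is parallel to $\vec v$, as claimed. The only points requiring care are the verification of the hypotheses of Theorem \ref{t3} — in particular that each $P_\theta$ is parallel to $\vec v$ and splits $\Gamma$ into two graphs over $\Pi\cap P_\theta$, which is immediate for a round circle — and the observation that the entire family of symmetry planes may be used simultaneously; no further analytic input beyond Theorem \ref{t3} is needed, the rest being a purely group-theoretic symmetrization argument.
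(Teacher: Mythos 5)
Your proposal is correct and is exactly the argument the paper intends: Corollary \ref{c2} is stated as an immediate consequence of Theorem \ref{t3}, obtained by applying that theorem to every vertical plane through the axis of the circle and composing the resulting reflections to generate all rotations about the axis. The verification that each such plane satisfies hypotheses (1) and (2) for a round circle is exactly the routine check you carry out.
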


In case of translating solitons ($\lambda=0$) we can drop in Cor. \ref{c2} the hypothesis on the embeddedness and the fact that the surface lies in one side of $\Pi$.   For this, we need  the following result, which makes its own interest (when $\lambda=0$ and the boundary is convex, the technique was employed in \cite{pyo}).

\begin{lemma}\label{lem} Let $\Sigma$ be an embedded compact $\lambda$-translating soliton whose boundary is a Jordan  curve contained in a plane $\Pi$ which is not parallel to $\vec{v}$. Let $D\subset\Pi$ be the  domain bounded by $\partial\Sigma$ and let $\vec{a}$ be a unit vector orthogonal to $\Pi$. If $\mbox{int}(\Sigma)$ is contained in the  solid right cylinder $D\times\r\vec{a}$, then $\Sigma$ is a graph on $\Pi$.  In case $\lambda=0$, we can drop the hypothesis on the embeddedness of $\Sigma$.
\end{lemma}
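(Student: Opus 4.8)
\emph{Plan.} The idea is to reduce to a one‑sided configuration and then hit an interior point where $\Sigma$ touches a horizontal plane from above with the tangency principle.

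\emph{Step 1 (reduction to one side).} Since $\mathrm{int}(\Sigma)\subset D\times\r\vec a$ with $D$ the open Jordan domain, $\Sigma$ meets $\Pi$ only inside $\overline D$, so $\Sigma\cap\mathrm{ext}(D)=\emptyset$ and Theorem \ref{tko} applies: either $\Sigma\subset\Pi$, in which case $\Sigma$ is the closed disk bounded by $\partial\Sigma$ and there is nothing to prove, or $\mathrm{int}(\Sigma)$ lies strictly on one side of $\Pi$. Assume the latter and choose coordinates with $\vec a=e_3=(0,0,1)$, $\Pi=\{z=0\}$ and $\mathrm{int}(\Sigma)\subset\mathrm{int}(D)\times(0,\infty)$. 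As $\Sigma$ is embedded, $\Sigma\cup\overline D$ is a closed topological surface bounding a compact region $W\subset\overline D\times[0,h]$, where $h=\max_\Sigma z$; orient $\Sigma$ by the Gauss map $N$ pointing out of $W$.

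\emph{Step 2 (an auxiliary inequality).} At the highest point $p^*$ of $\Sigma$ (an interior point, since $z=0$ on $\partial\Sigma$) the tangent plane is the horizontal plane $\{z=h\}$; because $W\subset\{z\le h\}$, the region $W$ cannot lie above $\Sigma$ near $p^*$, so $N(p^*)=e_3$. The plane $\{z=h\}$ oriented by $e_3$ has $H_\phi=\mu:=-\langle e_3,\vec v\rangle/2$, and $\Sigma$ lies below it near $p^*$; since $\Sigma$ cannot be contained in $\{z=h\}$ (its boundary is at $z=0$), the tangency principle gives $\lambda<\mu$.

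\emph{Step 3 (the main step, and the main obstacle).} Suppose $\Sigma$ is not a graph over $D$, i.e. the vertical projection $\pi|_\Sigma$ is not injective. Because $\mathrm{int}(\Sigma)$ is confined to the \emph{open} cylinder over $D$ while $\partial\Sigma$ covers $\partial D$ exactly once, and because $\Sigma\cup\overline D$ bounds the solid region $W$, this failure of injectivity must produce a ``cavity'': there is an interior point $p_0$ with $z(p_0)>0$ whose tangent plane is horizontal, such that near $p_0$ the surface lies in $\{z\ge z(p_0)\}$ while $W$ lies in $\{z\le z(p_0)\}$; hence $N(p_0)=e_3$. Comparing $\Sigma$ with the horizontal plane $\{z=z(p_0)\}$, both oriented by $e_3=N(p_0)$: the plane has $H_\phi=\mu$, the surface has $H_\phi=\lambda\le\mu$ by Step 2, and $\Sigma$ lies above the plane near $p_0$; the tangency principle and the analyticity of $\Sigma$ force $\Sigma\subset\{z=z(p_0)\}$, which is impossible since $\partial\Sigma\subset\{z=0\}$ and $z(p_0)>0$. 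Therefore $\pi|_\Sigma$ is injective and $\Sigma$ is a graph over $D$, hence over $\Pi$. The delicate point is the extraction of $p_0$ with $W$ lying \emph{below} $\Sigma$ there (so that $N(p_0)=e_3$, not $-e_3$): a non‑graph $\Sigma$ has $\langle N,e_3\rangle$ changing sign on $\mathrm{int}(\Sigma)$, and the content of the cylinder hypothesis is precisely that the ``downward'' part of $\Sigma$ must close off in such a cavity bottom; I expect that making this dichotomy rigorous — via a careful study of the level sets of $z|_\Sigma$ together with the position of $W$ — is the hardest part of the proof, whereas once $p_0$ is produced the tangency principle closes the argument immediately.

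\emph{The case $\lambda=0$.} Without embeddedness $W$ is unavailable, but Theorem \ref{t2} (which does not assume embeddedness) still gives that $\mathrm{int}(\Sigma)$ lies on one side of $\Pi$; and since a translator satisfies its equation for either choice of unit normal, all the tangency comparisons above remain valid for any local orientation, while the interior minimum of Step 3 is produced by sweeping $\Sigma$ with the horizontal planes $\{z=t\}$ downward from the top, as in \cite{pyo}.
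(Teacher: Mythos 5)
Your Steps 1 and 2 are fine, but Step 3 contains a genuine gap, and it is not merely a technical one: the point $p_0$ you need — an interior point with horizontal tangent plane at which $\Sigma$ lies locally in $\{z\ge z(p_0)\}$ and $W$ lies locally in $\{z\le z(p_0)\}$ — need not exist for a non-graph $\Sigma$ satisfying all the hypotheses. Take $W=\{(x,y,z):0\le z\le 1,\ x^2+y^2\le r(z)^2\}$ where $r(0)=1$, $r$ decreases to $1/2$, increases back to $3/4$, and then decreases to $0$ at $z=1$ (a flask with a neck). The lateral surface together with the closing cap is embedded, confined to the cylinder over $D$, lies on one side of $\Pi$, and is not a graph (a vertical line at radius $0.6$ meets it three times), yet the only interior point where $z|_\Sigma$ has a critical point is the absolute maximum: there is no ``cavity bottom''. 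The failure of graphicality is caused by overhangs, where the tangent plane is never horizontal, so no comparison with horizontal planes can detect it. Your sweeping of horizontal planes downward from the top (invoked again in the $\lambda=0$ paragraph) only ever reproduces the maximum point and cannot produce $p_0$ either. So the argument as written cannot be completed.

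The paper's proof avoids horizontal tangencies altogether by comparing $\Sigma$ with a \emph{vertical translate of itself}: set $\Sigma'=\Sigma+s\vec a$, take $s$ large so that $\Sigma'\cap\Sigma=\emptyset$, and decrease $s$ to the first contact. If $\Sigma$ is not a graph, some vertical line meets $\Sigma$ twice, so the first contact occurs at some $s_0>0$; the cylinder hypothesis forces the contact point to be interior to both surfaces (since $\partial\Sigma'\subset\partial D\times\{s_0\}$ while $\mathrm{int}(\Sigma)\subset D\times\r\vec a$ with $D$ open, and $\partial\Sigma\subset\partial D\times\{0\}$ while $\mathrm{int}(\Sigma')\subset D\times\r\vec a$). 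At that point the two surfaces are tangent with one on one side of the other, the orientations induced from $W$ and $W+s_0\vec a$ agree, translations preserve $H_\phi$, and the tangency principle plus analyticity give $\Sigma=\Sigma'$, contradicting $\partial\Sigma\ne\partial\Sigma'$; for $\lambda=0$ the orientation bookkeeping is unnecessary because a translator is a translator for either normal. Note that this self-comparison detects overhangs because the common tangent plane at the first contact is whatever it is — it need not be horizontal — which is exactly what your approach is missing. I recommend you replace Step 3 by this translation argument (your Steps 1 and 2 then become superfluous).
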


\begin{proof} After a change of coordinates, we suppose $\vec{a}=(0,0,1)$ and $\Pi$ the plane of equation $z=0$. Let 
$$T=\Sigma\cup (D\times (-\infty,0]\vec{a}).$$
Then $T$ is an embedded surface (non smooth along $\partial\Sigma$) that separates $\r^3$ in two connected components. Let $W\subset\r^3$ be the component that contains the point $(q_1,q_2,q_3-1)$, where  $q=(q_1,q_2,q_3)\in\mbox{int}(\Sigma)$ is a point such that $z(q)=\min\{z(p):p\in\Sigma\}$. In particular, $D\times (-\infty,q_3-1]\vec{a}\subset W$. Let $N$ be the orientation on $\Sigma$ pointing towards $W$. 

By contradiction, we suppose that $\Sigma$ is not a graph. We take $\Sigma'$  a copy of $\Sigma$ and we move upwards $\Sigma'$ in the direction of $\vec{a}$  until that $\Sigma'\cap\Sigma=\emptyset$. We come back $\Sigma'$ until the first intersection point  $p$ with $\Sigma$. Since we are assuming that $\Sigma$ is not a graph on $\Pi$, then $p$ is a common interior point of $\Sigma'$ and $\Sigma$, and the touching between $\Sigma'$ and $\Sigma$ occurs before $\Sigma'$ coincides with $\Sigma$. Let us observe that the orientations of $\Sigma$ and $\Sigma'$ coincides at $p$ because both point towards $W$. Since $\Sigma'$ lies in one side of $\Sigma$, the tangency principle implies that $\Sigma'=\Sigma$, a contradiction because $\partial\Sigma'\not=\partial\Sigma$.

If $\lambda=0$ and $\Sigma$ is only immersed, the existence of $W$ does not make sense, but  the above comparison argument  between $\Sigma$ and $\Sigma'$ holds at the interior point $p$ because we can reverse the orientations, if necessary, and $\Sigma$ and $\Sigma'$ follow being translating solitons. \end{proof}

 Hence we recover the following result proved in \cite{pyo}.

 \begin{corollary}\label{c1}
  The only compact  translating soliton with circular boundary contained in a  plane orthogonal to $\vec{v}$ is a  rotational surface whose rotation axis is parallel to $\vec{v}$.
 \end{corollary}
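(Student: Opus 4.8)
The plan is to deduce the corollary from Cor.~\ref{c2} by showing that \emph{every} compact translating soliton $\Sigma$ spanning a circle $\Gamma$ contained in a plane $\Pi$ orthogonal to $\vec{v}$ is automatically a graph over the round disk bounded by $\Gamma$, so that the embeddedness and the ``one side'' hypotheses of Cor.~\ref{c2} come for free. Since $\Pi$ is orthogonal to $\vec{v}$ it is in particular not parallel to $\vec{v}$, so Th.~\ref{t2} applies: either $\Sigma$ is contained in $\Pi$, in which case $\Sigma$ is the planar disk bounded by $\Gamma$ and there is nothing to prove, or $\mbox{int}(\Sigma)$ lies strictly in one of the two halfspaces determined by $\Pi$. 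I would assume the second case from now on.

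Next I would apply Cor.~\ref{prv}. The orthogonal projection $\pi:\r^3\rightarrow Sp\{\vec{v}\}^\bot$ restricts to the identity on $\Pi$, hence $\pi(\Gamma)=\Gamma$ is the given circle and the domain bounded by the convex hull of $\pi(\Gamma)$ is precisely the round disk $D\subset\Pi$ with $\partial D=\Gamma$. Therefore Cor.~\ref{prv} yields $\mbox{int}(\Sigma)\subset D\times\r\vec{v}$, that is, the interior of $\Sigma$ sits inside the solid right cylinder over $D$ whose axis is parallel to $\vec{v}$.

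With this inclusion in hand I would invoke Lem.~\ref{lem} in its $\lambda=0$ form, which is exactly the variant that does \emph{not} assume $\Sigma$ to be embedded: it gives at once that $\Sigma$ is a graph over $\Pi$. A graph is embedded, and by Th.~\ref{t2}(2) its interior lies in one side of $\Pi$, so $\Sigma$ satisfies every hypothesis of Cor.~\ref{c2} with $\lambda=0$. Consequently $\Sigma$ is a rotational surface whose rotation axis is parallel to $\vec{v}$, which is the assertion.

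The only genuinely delicate point is dispensing with the embeddedness assumption, and that difficulty is entirely housed in the $\lambda=0$ case of Lem.~\ref{lem}: there the comparison is between $\Sigma$ and a vertical translate $\Sigma'$ of itself at an interior contact point, and since the equation $H=\langle N,\vec{v}\rangle/2$ is preserved under reversing the orientation, the tangency principle can be applied there without a globally consistent choice of Gauss map. Everything else is routine bookkeeping: identifying $K=D$ in Cor.~\ref{prv}, checking that $\pi$ restricts to the identity on $\Pi$, and setting aside the trivial case in which $\Sigma$ is already the flat disk.
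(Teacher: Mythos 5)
Your proposal is correct and follows essentially the same route as the paper: Cor.~\ref{prv} (using convexity of the circle) to confine $\mbox{int}(\Sigma)$ to the solid cylinder, the $\lambda=0$ case of Lem.~\ref{lem} to get that $\Sigma$ is a graph and hence embedded, Th.~\ref{t2} for one-sidedness, and then Cor.~\ref{c2}. The only difference is the cosmetic reordering of when Th.~\ref{t2} is invoked.
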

 
 \begin{proof} Since the boundary is convex,  the interior of $\Sigma$ is included in the solid cylinder $\Omega\times\r\vec{v}$  by Cor. \ref{prv}. Lemma \ref{lem} yields   that $\Sigma$ is a graph,  in particular, $\Sigma$ is embedded. By Th. \ref{t2}   we know that $\Sigma$ lies in one side of  $\Pi$, and finally we apply Cor. \ref{c2}.
  \end{proof}
 
  From the above results, we have the next open questions:
  \begin{enumerate}
  \item[Q1] Is a compact translating soliton with circular boundary  a surface of revolution? Here the density vector $\vec{v}$ is arbitrary.   By Lem. \ref{lem} and Th. \ref{t3}, we know that the surface would be invariant by the reflection about a plane parallel to $\vec{v}$. However we think that if $\Pi$ is not orthogonal to $\vec{v}$, then there do not exist a translating soliton with circular boundary.
    \item[Q2] Is  an embedded compact $\lambda$-translating soliton with circular boundary a surface of revolution? In the case that the boundary plane is orthogonal to the density vector $\vec{v}$, we think that the answer is `yes'. 
    \end{enumerate}

 In Th. \ref{t3} we have prescribed   the boundary of the surface. Following \cite{pyo}, our second result replaces the symmetry of the boundary curve by the constancy of the angle between the surface and the  boundary plane. We extend the item (2) of the Main Theorem of \cite{pyo} as follows. 

 \begin{theorem}\label{t4}
Let  $\Sigma$ be an embedded compact  $\lambda$-translating soliton whose boundary is contained in a plane $\Pi$ and $\Pi$ is not parallel to the density vector $\vec{v}$.  If $\Sigma$ makes a constant contact angle with $\Pi$ along $\partial \Sigma$ and $\Sigma$ lies in one side of $\Pi$, then  $\Sigma$ has a symmetry about a  plane parallel to $\vec{v}$.
  \end{theorem}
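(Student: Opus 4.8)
The plan is to run the Alexandrov reflection method of the proof of Th. \ref{t3}, with two changes: the family of reflection planes is chosen so as to respect not only the soliton equation but also the boundary plane $\Pi$, and, since $\Gamma=\partial\Sigma$ is no longer assumed symmetric, the constant contact angle takes over the role that the symmetry of $\Gamma$ had there, so that the \emph{boundary} version of the tangency principle can be applied.

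First I would normalize $\vec v=e_3$. Since $\Pi$ is not parallel to $\vec v$, its unit normal has nonzero third coordinate; after a rotation about the $z$-axis I may assume this normal lies in the $yz$-plane, and I take the vertical planes $P_t=\{x=t\}$. Each $P_t$ is parallel to $\vec v$, so the reflection $R_t$ about $P_t$ carries a $\lambda$-translating soliton to a $\lambda$-translating soliton; moreover $P_t$ is orthogonal to $\Pi$, so $R_t$ leaves $\Pi$ invariant as a set and hence maps $\partial\Sigma\subset\Pi$ into $\Pi$. As in Th. \ref{t3}, embeddedness makes $\Sigma\cup D$ a closed surface, where $D\subset\Pi$ is the domain bounded by $\Gamma$; it bounds a compact region $W\subset\r^3$, and I orient $\Sigma$ by the unit normal $N$ that points out of $W$.

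Next I would perform the reflection as in Th. \ref{t3}: for $t$ large $P_t\cap\Sigma=\emptyset$, and as $t$ decreases I put $\Sigma(t)^{-}=\Sigma\cap\{x\leq t\}$, $\Sigma(t)^{+}=\Sigma\cap\{x\geq t\}$ and $\Sigma(t)^{*}=R_t(\Sigma(t)^{+})$. Using that $\Sigma$ lies on one side of $\Pi$, for $t$ slightly below $\max\{x(p):p\in\Sigma\}$ the cap $\Sigma(t)^{+}$ is a graph over $P_t$ and $\mbox{int}(\Sigma(t)^{*})\subset W$. Because $\Sigma$ is compact this cannot be maintained for all $t$, and at the least value $t^{*}$ for which it is maintained the reflected surface $\Sigma(t^{*})^{*}$ meets $\Sigma(t^{*})^{-}$ at a point $p$ of one of three types: (i) a common interior point; (ii) a point of $\Sigma\cap P_{t^{*}}$ at which $\Sigma$ meets $P_{t^{*}}$ orthogonally; (iii) a point of $\Gamma$ at which the reflected boundary arc $R_{t^{*}}(\Gamma\cap\{x\geq t^{*}\})$, which again lies in $\Pi$, meets $\Gamma$. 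In case (i) the Gauss maps agree at $p$ because both point out of $W$, and the interior tangency principle with analyticity gives $\Sigma(t^{*})^{*}=\Sigma(t^{*})^{-}$. In case (ii) the two surfaces share the arc $\Sigma\cap P_{t^{*}}$ near $p$, so the tangent lines of their boundaries agree there automatically, while the orthogonality makes the surfaces tangent at $p$; the boundary tangency principle and analyticity again give $\Sigma(t^{*})^{*}=\Sigma(t^{*})^{-}$. In either case $P_{t^{*}}$ is a plane of symmetry of $\Sigma$ parallel to $\vec v$, which is the assertion.

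Case (iii) is the one where the constant contact angle is used, and I expect it to be the main obstacle. I would argue first that in $\Pi$ the curve $\Gamma$ and the reflected arc $R_{t^{*}}(\Gamma\cap\{x\geq t^{*}\})$ cannot cross transversally at $p$: otherwise $\Sigma(t^{*})^{*}$ would have interior points rising from $\Pi\setminus\overline D$ and hence lying outside $\overline W$, contradicting $\mbox{int}(\Sigma(t^{*})^{*})\subset W$; so the two boundary arcs are tangent at $p$. Since $\Sigma$ and its reflected copy $\Sigma(t^{*})^{*}$ both lie on the same side of $\Pi$ and meet $\Pi$ along their boundaries at the \emph{same} constant angle, they then have the same tangent plane at $p$, and a check that the transported orientation still points towards $W$ (exactly as in Th. \ref{t3}) shows that the Gauss maps agree at $p$. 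The boundary tangency principle and analyticity give $\Sigma(t^{*})^{*}=\Sigma(t^{*})^{-}$, so $P_{t^{*}}$ is again a symmetry plane parallel to $\vec v$. The delicate point, requiring the most care, is exactly this verification: without the constant-angle hypothesis the reflected boundary could meet $\Gamma$ in a configuration to which the boundary tangency principle does not apply — precisely the difficulty pointed out in the discussion following the proof of Th. \ref{t3}.
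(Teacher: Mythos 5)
Your strategy is the paper's: reflect in the vertical planes $P_t=\{x=t\}$ orthogonal to the (essentially unique) horizontal direction parallel to $\Pi$, so that each reflection preserves both the soliton equation and the plane $\Pi$, and let the constant contact angle force tangency at a first-touching point on $\Gamma$. Your treatment of the interior case, of the orthogonality case along $\Sigma\cap P_{t^*}$, and of a contact point $p\in\Gamma\setminus P_{t^*}$ (where the equal contact angles with $\Pi$, together with the tangency of the two boundary arcs in $\Pi$, give a common tangent plane so that the boundary tangency principle applies) coincides with cases (1)--(3) of the paper's proof.

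However, your list of first-contact configurations omits one case which the paper treats separately and which your arguments as written do not cover: the touching point may be a \emph{corner} point $p\in\Gamma\cap P_{t^*}$ at which $\Sigma$ is orthogonal to $P_{t^*}$. Near such a $p$ the boundary of $\Sigma(t^*)^{-}$ is only piecewise smooth (an arc of $\Gamma$ and an arc of $\Sigma\cap P_{t^*}$ meeting at $p$), and the two surfaces being compared touch only along a quarter-neighbourhood; neither the interior tangency principle nor the usual boundary version (which requires a smooth boundary point with a half-disk neighbourhood and equal boundary tangent lines) applies there. The paper's case (4) handles this with Serrin's tangency principle at a corner point \cite{se2}. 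Your case (ii), which argues via the shared arc $\Sigma\cap P_{t^*}$, silently assumes $p$ is interior to that arc, and your case (iii) assumes $p\notin P_{t^*}$, so the corner configuration falls through the cracks. Adding this fourth case, resolved by the corner version of the tangency principle, completes the argument; with that amendment the proof is identical in substance to the paper's.
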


  \begin{proof} The proof uses again the Alexandrov reflection method.   We use the same notation as in Th. \ref{t3} and we only point out  the differences. We suppose $\vec{v}=(0,0,1)$ again. Let $\vec{w}$ be a horizontal vector and parallel to $\Pi$: if $\Pi$ is not a horizontal plane, this  direction $\vec{w}$ is unique.  After a rotation about the $z$-axis, we suppose that $\vec{w}=(1,0,0)$.  Let $\{P_t\}_{t\in\r}$ be the foliation of $\r^3$ by planes of equation $x=t$.
  
  We begin with the reflection method as in Th. \ref{t3}. After the first time $t=t_1$ which  $P_{t_1}$ touches $\Sigma$, we arrive until $t=t_2$ where $\mbox{int}(\Sigma(t)^*)$ lies outside $W$ for every $t<t_2$. We have the next possibilities:
  \begin{enumerate}
  \item There exists $p\in \mbox{int}(\Sigma(t_2)^*)\cap \mbox{int}(\Sigma(t_2)^{-})$.
  \item $\Sigma$ is orthogonal to $P_{t_2}$ at some point $p\in \partial \Sigma(t_2)^*\cap \partial \Sigma(t_2)^{-}\setminus\Gamma$.
    \item There exists $p\in\partial \Sigma(t_2)^*\cap\partial \Sigma(t_2)^{-}\cap \Gamma$ and $p\not\in P_{t_2}$.
   \item $\Sigma$ is orthogonal to $P_{t_2}$ at some point $p\in \partial \Sigma(t_2)^*\cap \partial \Sigma(t_2)^{-}\cap\Gamma$.

   \end{enumerate}

The cases (1) and (2) appeared in Th. \ref{t3} and the tangency principle  implies  that $P_{t_2}$ is a plane of symmetry of $\Sigma$.  In case (3), we have $T_p\partial \Sigma(t_2)^*=T_p\partial \Sigma(t_2)^{-}$ and the surfaces $\Sigma(t_2)^*$ and $\Sigma(t_2)^{-}$ are tangent at $p$ because the contact angle between $\Sigma(t_2)^*$ and $P_{t_2}$ agrees with the one between $\Sigma(t_2)^{-}$ and $P_{t_2}$: here we use that $P_{t_2}$ is a vertical plane and that $\Pi$ is invariant by reflection about $P_{t_2}$. Then we can apply the boundary version of the tangency principle to prove that   $P_{t_2}$ is a plane of symmetry of $\Sigma$. In case (4), the tangency principle at a corner point (\cite{se2}) proves that $\Sigma(t_2)^{-}=\Sigma(t_2)^*$ and thus, $P_{t_2}$ is a plane of symmetry of $\Sigma$ again.
   \end{proof}
   
As in Cor. \ref{c1},  a consequence of Ths. \ref{t2} and \ref{t4} is:

  \begin{corollary}\label{c-c2} Let  $\Sigma$ be an embedded compact  translating soliton  whose boundary is contained in a   plane $\Pi$ and $\Pi$ is orthogonal to the density vector $\vec{v}$.    If $\Sigma$ makes a constant contact angle with $\Pi$ along $\partial \Sigma$, then  $\Sigma$ is a surface of revolution whose rotation axis is parallel to $\vec{v}$ and $\partial \Sigma$ is a circle.
 \end{corollary}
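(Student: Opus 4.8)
The plan is to combine Theorems \ref{t2} and \ref{t4} and then to run the Alexandrov reflection of the latter in \emph{every} horizontal direction, pinning down the resulting symmetry planes by a barycenter argument. After a change of coordinates we may assume $\vec{v}=(0,0,1)$ and $\Pi=\{z=0\}$. Since $\Pi$ is orthogonal to $\vec{v}$ it is not parallel to $\vec{v}$, so Th. \ref{t2} puts us in its case (2): the interior of $\Sigma$ lies in one side of $\Pi$. This is exactly the extra hypothesis required by Th. \ref{t4}.

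First I would note that for \emph{each} unit horizontal vector $\vec{w}$ the hypotheses of Th. \ref{t4} hold — $\Sigma$ is embedded and compact, $\Pi$ is not parallel to $\vec{v}$, the contact angle along $\partial\Sigma$ is constant, and $\Sigma$ lies on one side of $\Pi$ — and, because $\Pi$ is horizontal, the auxiliary direction $\vec{w}$ appearing in the proof of Th. \ref{t4} may be chosen to be any horizontal direction. Hence that proof yields, for every horizontal $\vec{w}$, a plane $P_{\vec{w}}$ parallel to $\vec{v}$ and normal to $\vec{w}$ about which $\Sigma$ is symmetric: $\Sigma$ has a vertical symmetry plane with each prescribed horizontal normal. (One is using the proof of Th. \ref{t4}, not merely its statement, which only asserts the existence of one such plane.)

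Next I would restrict these symmetries to the boundary plane. As $P_{\vec{w}}$ is vertical and $\Pi$ is horizontal, reflection across $P_{\vec{w}}$ leaves $\Pi$ invariant and therefore maps $\partial\Sigma=\Sigma\cap\Pi$ onto itself, acting inside $\Pi$ as the reflection across the line $L_{\vec{w}}=P_{\vec{w}}\cap\Pi$. Thus the Jordan curve $\partial\Sigma\subset\Pi$ has an axis of symmetry in every direction. Let $c$ be the barycenter of $\partial\Sigma$ with respect to arclength; each reflection across $P_{\vec{w}}$ is an isometry preserving $\partial\Sigma$, so it fixes $c$, whence $c\in P_{\vec{w}}$ for every $\vec{w}$. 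Therefore the vertical line $\ell=\{c+t\vec{v}:t\in\r\}$ lies in every $P_{\vec{w}}$.

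To finish, take horizontal directions $\vec{w}_1\neq\pm\vec{w}_2$: the planes $P_{\vec{w}_1}$, $P_{\vec{w}_2}$ are then distinct, vertical, and both contain $\ell$, so they meet exactly along $\ell$ and the composition of the two reflections is a rotation about $\ell$ through twice the angle between $\vec{w}_1$ and $\vec{w}_2$. Letting the directions vary, $\Sigma$ is invariant under every rotation about $\ell$, i.e.\ $\Sigma$ is a surface of revolution with axis $\ell$ parallel to $\vec{v}$; its boundary, a single Jordan curve in the horizontal plane $\Pi$ invariant under all these rotations, is a circle centered at $\ell\cap\Pi$. The one point that really needs an idea is the concurrency of the planes $P_{\vec{w}}$ along a common axis; the barycenter observation supplies it, and the rotational symmetry is then automatic. (Alternatively, once $\partial\Sigma$ is seen to be a circle one could invoke Cor. \ref{c2} directly.)
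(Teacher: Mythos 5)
Your proposal is correct and follows essentially the same route as the paper, which derives this corollary from Th. \ref{t2} (to place $\Sigma$ on one side of $\Pi$) combined with Th. \ref{t4}, whose proof explicitly notes that the horizontal direction $\vec{w}$ is arbitrary precisely when $\Pi$ is horizontal, yielding a vertical symmetry plane for every horizontal direction. Your barycenter argument for the concurrency of these planes and the passage from reflections to rotations fills in details the paper leaves implicit, but it is the same proof.
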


If we compare Cor. \ref{c-c2}  with statement (2) in the Main Theorem of \cite{pyo},  we  have assumed that $\Sigma$ is embedded whereas in  \cite{pyo} the embeddedness is replaced by the convexity of $\partial\Sigma$:  Corollary \ref{prv} and Lem. \ref{lem}   concludes that if $\partial\Sigma$ is convex, then $\Sigma$ is a graph and so, it is embedded.

As we have observed, in order to apply Th. \ref{t3}, we need to assure that the surface lies in one side of the boundary plane. Our last result proves that it suffices to assume that the surface is transverse to the boundary plane along its boundary. The next result extends  a similar case for cmc surfaces: see \cite{bemr}.
 
\begin{theorem}\label{tfi} Let $\Pi$ be a plane orthogonal to the density vector $\vec{v}$. Let $\Sigma$ be an embedded compact $\lambda$-translating soliton such that its boundary $\partial\Sigma$ is a convex curve contained in $\Pi$.  If in a neighbourhood of $\partial\Sigma$, $\Sigma$ lies in the halfspace of $\r^3\setminus \Pi$ where points $\vec{v}$ and $\Sigma$ is    transverse to $\Pi$ along $\partial\Sigma$, then $\Sigma$ lies in one side of $\Pi$ and thus, $\Sigma$ inherits all the symmetries of $\partial\Sigma$.
\end{theorem}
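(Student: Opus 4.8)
The plan is to reduce the statement to Th.~\ref{tko} and Th.~\ref{t3}. After a change of coordinates assume $\vec v=(0,0,1)$, $\Pi=\{z=0\}$, and let $D\subset\Pi$ be the convex domain bounded by $\partial\Sigma$. First I record that once we know $\Sigma$ lies in one side of $\Pi$, the symmetry conclusion is automatic: every reflective symmetry of the planar convex curve $\partial\Sigma$ is the reflection across a plane $P$ spanned by a line of $\Pi$ and by $\vec v$, hence $P$ is parallel to $\vec v$ and, by convexity, $P$ separates $\partial\Sigma$ into two graphs over $P\cap\Pi$; so Th.~\ref{t3} applies to each such $P$ and $\Sigma$ inherits these symmetries (and if $\partial\Sigma$ is a circle, $\Sigma$ is rotational by Cor.~\ref{c2}). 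Thus the whole content is to prove $\Sigma\subset\{z\geq 0\}$.

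A preliminary remark fixes the picture. The transversality together with the one-sidedness near $\partial\Sigma$ forces $\langle\nu,\vec v\rangle>0$ along $\partial\Sigma$ for the inward conormal $\nu$ (otherwise $T_p\Sigma=\Pi$ at some boundary point). Integrating \eqref{lapla} over $\Sigma$ and capping with $\overline D$ by the divergence theorem, as in the argument before Th.~\ref{tlambda}, gives $0<\int_{\partial\Sigma}\langle\nu,\vec v\rangle\,ds=-2\lambda\,\varepsilon\,\mathrm{area}(D)-\int_\Sigma\langle N,\vec v\rangle^2\,d\Sigma$ for a sign $\varepsilon\in\{-1,+1\}$; in particular the hypotheses are vacuous when $\lambda=0$, while for $\lambda\neq 0$ the orientation $N$ of $\Sigma$ is tied to $\mathrm{sgn}(\lambda)$. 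This sign datum is what will tell us which Gauss map of $\Sigma$ to feed into the tangency principle.

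Granting $\Sigma\cap(\Pi\setminus\overline D)=\emptyset$, Th.~\ref{tko} gives that $\mathrm{int}(\Sigma)$ lies in one side of $\Pi$ or $\Sigma\subset\Pi$; since (by transversality) interior points of positive height accumulate on $\partial\Sigma$, we must have $\mathrm{int}(\Sigma)\subset\{z>0\}$, hence $\Sigma\subset\{z\geq 0\}$, and we are done. So I would prove $\Sigma\cap(\Pi\setminus\overline D)=\emptyset$ by contradiction: if $q\in\Sigma\cap\Pi$ and $q\notin\overline D$, then $q\in\mathrm{int}(\Sigma)$, and by convexity of $D$ there is a supporting line $\ell$ of $D$ meeting $\partial D$ at a point $q'\in\partial\Sigma$ with $q$ strictly beyond $\ell$. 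Sweeping the vertical planes parallel to the vertical plane $Q$ over $\ell$ from beyond $\Sigma$ back towards $\Sigma$, the first one meeting $\Sigma$, say $Q_{t_0}$, does so at an interior point $p_0$ (interior because $\partial\Sigma\subset\overline D$ stays on the closed near side of $Q$), where $T_{p_0}\Sigma=Q_{t_0}$ and $\Sigma$ is locally on one side of $Q_{t_0}$. A vertical plane is a translating soliton for either orientation, so one may orient $Q_{t_0}$ to match $N(p_0)$; the tangency principle then forces $\Sigma\subset Q_{t_0}$, which is absurd ($\partial\Sigma\not\subset Q_{t_0}$), unless $N(p_0)$ points into the side containing $\Sigma$ and $\lambda$ has the matching sign. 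That last possibility is ruled out by the flux sign above together with one further comparison: if moreover $\Sigma$ has points with $z<0$, compare $\Sigma$ at its lowest (interior) point with the horizontal affine tangent plane there, a $(\pm 1/2)$-translating soliton, exactly as in the proof of Th.~\ref{tko}; and to treat the case in which $\Sigma$ dips only slightly below $\Pi$ (where plane barriers are too coarse), slide a rotational $\lambda$-translating graph $B$ --- a ``bowl'' from the list in \cite{lo2} --- vertically from far below until its first, necessarily interior, contact with $\Sigma$, where the tangency principle between two $\lambda$-translating solitons plus the analyticity of \eqref{mean1} yields $\Sigma\subset B$, again impossible.

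The hard part, and the reason the argument is heavier than the constant-mean-curvature case of \cite{bemr}, is the bookkeeping of orientations. For cmc surfaces one simply reflects the part of $\Sigma$ below $\Pi$ across $\Pi$ and runs Alexandrov's moving-planes method; here $\Pi\perp\vec v$, so reflection across $\Pi$ changes \eqref{eq1} into $2H=2\lambda-\langle N,\vec v\rangle$ and is not available. Hence the real work is to let convexity of $\partial\Sigma$, transversality, and the flux identity decide, in each use of the tangency principle, which of the two unit normals of $\Sigma$ is the relevant one; the subcase $0<|\lambda|$ with $\Sigma$ only slightly below $\Pi$, handled through the rotational barriers of \cite{lo2}, is the delicate point where comparison with planes does not suffice.
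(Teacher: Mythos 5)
Your overall skeleton is the right one --- reduce the symmetry claim to Th.~\ref{t3}, reduce the positional claim to showing $\Sigma\cap\mbox{ext}(D)=\emptyset$ so that Th.~\ref{tko} applies, observe that transversality forces $\langle\nu,\vec{v}\rangle>0$ along $\partial\Sigma$, and aim to contradict the flux identity obtained by integrating \eqref{lapla}. This is the same strategy as the paper's. But there is a genuine gap exactly at the point you yourself flag as ``the real work'': the orientation bookkeeping is never actually done. Your vertical-plane sweep at a point $q\in\Sigma\cap\mbox{ext}(D)$ correctly produces a dichotomy ($\lambda>0$ if $N(p_0)$ points toward the side containing $\Sigma$, $\lambda<0$ otherwise), and the flux identity correctly ties $\mbox{sgn}(\lambda)$ to $\mbox{sgn}\bigl(\int_\Sigma\langle N,e_3\rangle\bigr)=\mp\,\mbox{sgn}\bigl(\int_D\langle\eta_D,e_3\rangle\bigr)$. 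But these are two relations among three unknown signs ($\mbox{sgn}(\lambda)$, the side toward which $N$ points at the touching point of the sweep, and the induced orientation $\eta_D$ on the capping disk), and no contradiction follows until the latter two are pinned down and linked to each other. The paper does precisely this by the surgery of \cite{bemr}: it builds a closed embedded surface $\tilde{\Sigma}$, orients it toward the bounded component $W$, and then uses Alexandrov reflection with vertical planes plus the convexity of $\Gamma$ to show $\tilde{\Sigma}\cap\mbox{ext}(D)$ is a single non-nullhomologous curve $C$; only then can one read off that $\eta_D=-e_3$ (so $\int_\Sigma\langle N,e_3\rangle=+\mbox{area}(D)$) and that the sweeping plane's first contact sees $N$ pointing toward $W$ (so $\lambda>0$), which together contradict $\langle\nu,e_3\rangle>0$. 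Your proposal omits this entire component analysis, and without it the two sign relations float free.

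The substitutes you offer do not close the gap. Comparing $\Sigma$ at its lowest point with its horizontal affine tangent plane requires knowing $N$ there, and in Th.~\ref{tko} that knowledge came from a capping construction that presupposes $\Sigma\cap\mbox{ext}(D)=\emptyset$ --- the very statement under proof. The rotational barrier $B$ from \cite{lo2} is likewise unsubstantiated: the paper establishes neither that a complete rotational $\lambda$-translating \emph{graph} exists for the given $\lambda$, nor that the first contact when sliding it upward is interior (it could occur on $\partial\Sigma$), nor that the Gauss maps agree there (the tangency principle between two $\lambda$-solitons with $\lambda\neq0$ needs matching orientations, which is again the undetermined datum). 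None of this machinery is needed in the paper's proof, whose contradiction comes purely from the flux identity once the topology of $\Sigma\cap\Pi$ and the orientation of the cap are controlled. To repair your argument you would need to supply the \cite{bemr}-style decomposition and the elimination of extra components of $\Sigma\cap\mbox{ext}(D)$, or an equivalent device that determines $\eta_D$ and $N(p_0)$ simultaneously.
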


\begin{proof} The case $\lambda=0$ was proved in Th. \ref{t2}. Suppose $\lambda\not=0$. After a change of coordinates, we assume $\vec{v}=e_3=(0,0,1)$ and $\Pi$ is the plane of equation $z=0$. The hypothesis says that $\Sigma$ is contained in the halfspace $z>0$ in a neighbourhood of  $\Gamma=\partial\Sigma$ and $\langle\nu,e_3\rangle>0$ along $\Gamma$, where   $\nu$  is the unit inward conormal vector of $\Sigma$ along $\Gamma$. We will prove that $\mbox{int}(\Sigma)\subset \{z> 0\}$. By contradiction, we suppose that $\Sigma\cap\Pi$ has other components than $\Gamma$.

 Let $D\subset\Pi$ be the domain bounded by $\Gamma$ and denote $\mbox{ext}(D)=\Pi\setminus\overline{D}$. We know by Th. \ref{tko} that it is not possible  that all components of $\Sigma\cap\Pi$ other than $\Gamma$ are in $D$. Once proved this, and by using the transversality of $\Sigma$ along $\Gamma$, we now   construct a suitable embedded closed   surface $\tilde{\Sigma}$ by removing from $\Sigma$ some annuli that across $D$,  attaching some horizontal disks  and finally the very domain $D$: we refer to the reader to  Th. 1 in \cite{bemr} for details. Then $\tilde{\Sigma}$ separates $\r^3$ into two connected components and denote by $W$ the bounded component. Consider on $\tilde{\Sigma}$ the orientation $N$ pointing towards $W$.  By applying the Alexandrov reflection method by means of reflections about vertical planes, it is easy to prove that it is not possible that there exist components of $\tilde{\Sigma}\cap \mbox{ext}( D)$ nullhomologous in $\mbox{ext}(D)$, neither, two or more components in $\tilde{\Sigma}\cap \mbox{ext}(D)$: in the first case,  we use the convexity of $\Gamma$. 

Finally, the last case to consider is that $\tilde{\Sigma}\cap \mbox{ext}(D)$  has exactly one component $C$.  Since $N$ points towards $W$, then   along $\Gamma\cup C$, the vector $N$ points into the annulus in $\mbox{ext}(D)$ bounded by $\Gamma\cup C$. In particular, $N$ points towards $\mbox{ext}(D)$ along $\Gamma$ and this implies that the orientation $\eta_D$ induced by $\tilde{\Sigma}$ in $D$ is  $\eta_D=-e_3$. We prove that with the orientation $N$, the value of $\lambda$ is positive. Fix $P$ a  plane parallel to $\vec{v}$ sufficiently  far so $P$ does not intersect $\tilde{\Sigma}$. Then we move $P$ parallel towards $\tilde{\Sigma}$ until the first touching point. By the existence of the component $C$, the first touching point between $P$ and $\tilde{\Sigma}$ occurs at some interior point $p$ of $\tilde{\Sigma}$. Since $P$ is a translating soliton, and $N$ points to $W$, then $\tilde{\Sigma}$ lies above $P$ around $p$ and  the tangency principle implies   $\lambda>0$. 

Finally, by integrating (\ref{lapla}) in $\Sigma$, we obtain
\begin{equation}\label{nnn}-\int_{\Gamma}\langle\nu,e_3\rangle ds=2\lambda\int_{ \Sigma}\langle N,e_3\rangle\ d\Sigma+\int_{\Sigma}\langle N,e_3\rangle^2\ d\Sigma.
\end{equation}
Since $\eta_D=-e_3$, the divergence theorem implies  $\int_\Sigma\langle N,e_3\rangle =-\int_D\langle \eta_D,e_3\rangle=\mbox{area}(D)$. Thus (\ref{nnn}) is now
$$-\int_{\Gamma}\langle\nu,e_3\rangle ds=2\lambda\mbox{ area}(D)+\int_{\Sigma}\langle N,e_3\rangle^2\ d\Sigma.$$
However the left-hand side is negative and the right-hand side is positive, obtaining a contradiction. 
\end{proof}


\end{document}